\numberwithin{equation}{section}
\newtheorem{theorem}{Theorem}[section]
\newtheorem*{theorem*}{Theorem}
\newtheorem*{framework*}{Framework for computing $\mathbf {\mathcal S}(X)$ for trisections}
\newtheorem{corollary}{Corollary}[section]
\newtheorem{lemma}{Lemma}[section]
\theoremstyle{definition}
\newtheorem{definition}{Definition}[section]
\newtheorem{remark}{Remark}[section]
\let\c@conjecture=\c@theorem
\let\c@corollary=\c@theorem
\let\c@proposition=\c@theorem
\let\c@lemma=\c@theorem
\let\c@definition=\c@theorem
\let\c@example=\c@theorem
\let\c@remark=\c@theorem
\let\c@equation\c@theorem
\let\c@question\c@theorem
\let\c@fact\c@theorem
\def\makeautorefname#1#2{\expandafter\def\csname#1autorefname\endcsname{#2}}
\DeclareMathOperator{\id}{id}
\DeclareMathOperator{\Obj}{Obj}
\DeclareMathOperator{\Hom}{Hom}
\title{Cornered skein lasagna theory}
\author{Sarah Blackwell}
\address{University of Virginia, Charlottesville, United States}
\email{\href{mailto:blackwell@virginia.edu}{blackwell@virginia.edu}}
\urladdr{\url{https://seblackwell.com}}
\author{Vyacheslav Krushkal}
\address{University of Virginia, Charlottesville, United States}
\email{\href{mailto:krushkal@virginia.edu}{krushkal@virginia.edu}}
\urladdr{}
\author{Yangxiao Luo}
\address{University of Virginia, Charlottesville, United States}
\email{\href{mailto:yl8by@virginia.edu}{yl8by@virginia.edu}}
\urladdr{}
\begin{document}
\begin{abstract}
We extend the skein lasagna theory of Morrison–Walker–Wedrich to $4$-manifolds with corners and formulate gluing formulas for $4$-manifolds with boundary and, more generally, with corners. As an application, we
develop a categorical framework for a presentation of the skein lasagna module of trisected closed $4$-manifolds.
Further, we extend the theory to dimension two by introducing bicategories for closed oriented surfaces and proving a gluing formula for the categories associated with $3$-manifolds with boundary.
\end{abstract}
\maketitle

\section{Introduction}
The skein lasagna module $\mathcal{S}^N(X,L)$ is an invariant of a smooth $4$-manifold with a framed link $L$ in its boundary, introduced by Morrison, Walker and Wedrich \cite{MWW2}. It takes as input the $\mathfrak{gl}_N$ Khovanov-Rozansky homology \cite{KR} of links in the $3$-sphere and provides a universal extension to $4$-manifolds using embedded surfaces in $X$ with boundary equal to $L$ and a collection of links in the boundary of a finite family of $4$-balls removed from the interior of $X$; see \autoref{sec: background} for more details. The skein lasagna module has a bigrading induced from the Khovanov-Rozansky homology, and it may be considered as the degree zero part of blob homology theory of Morrison-Walker \cite{MW}. 
Fixing $N$ throughout, we abbreviate $\mathcal{S}^N(X,L)$ as $\mathcal{S}(X,L)$.

Direct calculation of the skein lasagna module from its definition appears feasible only in the simplest cases: the $4$-sphere or the $4$-ball with a link $L$ in its boundary. The theory in these cases recovers the ground ring for $S^4$, and the Khovanov-Rozansky homology for $(D^4,L)$. A method for computing the lasagna module for $4$-manifolds built without $1$- or $3$-handles was introduced by Manolescu-Neithalath \cite{MN}. In this case it reformulates $\mathcal{S}(X)$ using the {\em cabled Khovanov-Rozansky} homology for the attaching link of the $2$-handles in $S^3$, a more concrete invariant, but still very challenging to compute. Additional methods that incorporate $1$- and $3$-handles were developed by Manolescu-Walker-Wedrich \cite{MWW1}.
It was shown by Sullivan-Zhang \cite{SZ} and by Ren-Willis \cite{RW} that $\mathcal{S}(S^2\times S^2)$ is trivial. Further vanishing results for $4$-dimensional $2$-handlebodies were established in \cite{RW} under certain assumptions on the framing of the $2$-handles.

In a major contribution, Ren and Willis \cite{RW} showed that the skein lasagna module is a powerful $4$-manifold invariant, distinguishing an exotic pair of compact smooth $4$-manifolds. It is therefore desirable to develop new methods of computing the invariant. The aim of this paper is to develop a TQFT framework for studying the skein lasagna module using cut-and-paste methods. 

In more detail, let $X$ be a $4$-manifold with corners with $\partial X \cong (-Y_1)\cup_{\Sigma}Y_2$, where $Y_1, Y_2$ are $3$-manifolds with common boundary equal to a closed surface $\Sigma$. The invariant of $X$, with a specified finite collection of points $P\subset \Sigma$,  takes the form of a bifunctor 
$$F_{X,P} \colon \mathcal{S}(Y_1,P) \times \mathcal{S}(Y_2,P)^{op} \to \mathcal{V}$$ defined on certain categories associated with $Y_1, Y_2$ and valued in the category $\mathcal{V}$ of $\mathbb{Z}^2$-graded $\mathbb{Z}$-modules; see \autoref{sec: gluing}.

Given two such manifolds $X_1, X_2$ with $\partial X_1\cong (-Y_1) \cup_{\Sigma} Y_2$ and $\partial X_2\cong (-Y_2) \cup_{\Sigma} Y_3$ we formulate a suitable version of the tensor product of their bifunctors $F_{X_1,P}, F_{X_2,P}$. The following theorem (\autoref{thm:theorem1} in \autoref{sec: gluing}) states our first gluing result.

\begin{theorem*}
In the notation as above, \begin{equation} \label{eq: gluing thm}
        F_{X_1, P} \otimes^{}_{\mathcal{S}(Y_2,P)} F_{X_2, P} \cong F_{X_1 \cup^{}_{Y_2} X_2, P}
\end{equation}
    as $(\mathcal{S}(Y_1, P), \mathcal{S}(Y_3, P))$-bimodules.
\end{theorem*}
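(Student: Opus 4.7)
The plan is to realize both sides of the stated isomorphism as graded modules generated by lasagna fillings modulo explicit relations, and then exhibit mutually inverse natural pairings between them.

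First, unpack the definitions. Fix objects $a \in \mathcal{S}(Y_1, P)$ and $c \in \mathcal{S}(Y_3, P)$. The left hand side is by definition the coend
\begin{equation*}
\bigl(F_{X_1, P} \otimes_{\mathcal{S}(Y_2, P)} F_{X_2, P}\bigr)(a, c) = \int^{b \in \mathcal{S}(Y_2, P)} F_{X_1, P}(a, b) \otimes F_{X_2, P}(b, c),
\end{equation*}
i.e., the quotient of $\bigoplus_{b} F_{X_1, P}(a, b) \otimes F_{X_2, P}(b, c)$ by the balancing relation $\phi^{\ast} u \otimes v \sim u \otimes \phi_{\ast} v$ for every morphism $\phi \colon b \to b'$ in $\mathcal{S}(Y_2, P)$. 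The right hand side is generated by lasagna fillings in $X_1 \cup_{Y_2} X_2$ whose boundary data match $a$ on $Y_1$ and $c$ on $Y_3$, modulo the skein lasagna relations.

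The heart of the proof is a pair of maps that I will show are mutually inverse. The \emph{gluing map} sends a decomposable tensor $u \otimes v$, represented by lasagna fillings $\mathcal{L}_1 \subset X_1$ and $\mathcal{L}_2 \subset X_2$ with matching boundary link $b$ on $Y_2$, to the union $\mathcal{L}_1 \cup \mathcal{L}_2 \subset X_1 \cup_{Y_2} X_2$. The \emph{cutting map} takes a lasagna filling $\mathcal{L}$ in $X_1 \cup_{Y_2} X_2$, uses an ambient isotopy to make its underlying surface transverse to $Y_2$ while pushing all input $4$-balls off $Y_2$, then sets $b := \mathcal{L} \cap Y_2$ and records the resulting pair $(\mathcal{L}_1, \mathcal{L}_2)$ in $F_{X_1, P}(a, b) \otimes F_{X_2, P}(b, c)$. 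Four items must then be checked: \emph{(i)} the gluing map respects the balancing relation, since a morphism $\phi$ corresponds to a surface in a collar $Y_2 \times I$ and attaching this collar on either side yields isotopic lasagnas after gluing; \emph{(ii)} the cutting map is independent of the transversalizing isotopy, because any two such isotopies differ by one supported in a collar of $Y_2$, which is precisely a morphism in $\mathcal{S}(Y_2, P)$ absorbed by balancing; \emph{(iii)} both maps descend through the skein lasagna relations (Khovanov–Rozansky evaluations and local isotopies), since these are local and can be pushed off $Y_2$; \emph{(iv)} the two maps are mutually inverse, as gluing after cutting recovers the original lasagna up to isotopy and cutting after gluing recovers $b$ by construction. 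Naturality in $a$ and $c$ then promotes this to an isomorphism of $(\mathcal{S}(Y_1, P), \mathcal{S}(Y_3, P))$-bimodules.

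The main technical obstacle is step $(ii)$, namely well-definedness of the cutting map in the presence of lasagna input balls that lie near $Y_2$. One must show that isotoping such a ball across $Y_2$ is captured by the action of an explicit morphism in $\mathcal{S}(Y_2, P)$; this requires a normal-form argument in which one pushes the input ball slightly to one side of $Y_2$, expands a cylindrical neighborhood, and identifies the resulting \emph{partial evaluation} of the Khovanov–Rozansky class against a slice with a morphism of $\mathcal{S}(Y_2, P)$ acting on $b$. Once this is established, combined with standard skein-lasagna invariance under the remaining local moves, the gluing formula follows.
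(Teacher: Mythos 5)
Your proposal is correct and follows essentially the same route as the paper: a gluing map and a cutting map (transversality to $Y_2$, input balls pushed off the wall), with the ambiguity of the cut --- including the key case of an input ball crossing $Y_2$ --- absorbed into the balancing relation over $\mathcal{S}(Y_2,P)$, and skein relations handled by localizing them in one piece. The only point the paper makes more precise than your step \emph{(ii)} is the localization itself: rather than asserting that two transversalizing isotopies differ by one supported in a collar of $Y_2$, it invokes a fragmentation lemma to decompose an arbitrary ambient isotopy into local pieces, each implementing a single singular event (a ball crossing $Y_2$ or a Morse move of $S\cap Y_2$) and hence a single application of the relation $\sim_1$.
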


As a special case, this result applies to gluing $4$-manifolds with boundary (without corners) as well. A version of \autoref{eq: gluing thm} is also established for {\em self-gluing}, where a $4$-manifold $X$ has boundary of the form 
$(-Y) \cup_{\Sigma} Y$ and the two copies of $Y$ are identified.

In addition to setting up a TQFT framework, our work was motivated by the goal of developing a new computational method of the skein lasagna module of trisected $4$-manifolds. In this case a closed $4$-manifold is given as the union of three cornered $4$-manifolds which are $4$-dimensional $1$-handlebodies $\natural^{k} (S^1 \times B^3)$, subject to certain additional conditions; see \autoref{subsec:trisections} for a more detailed discussion. The following statement summarizes in general terms our presentation of the skein lasagna module for a given trisection $X=X_1\cup X_2\cup X_3$; more precise theorems are given in  \autoref{sec: gluing}. 

\begin{framework*}
For a closed trisected $4$-manifold $X$ its skein lasagna module ${\mathcal S}(X)$ can be assembled from the cornered invariants of the $1$-handlebodies $X_i$ using our gluing formulas, together with a stabilization equivalence relation on marked points $P\subset \Sigma$ in the central surface.
\end{framework*}

In principle, this reduces the problem of computing the invariant of $X$ to understanding the cornered invariant of the standard pieces (4-dimensional $1$-handlebodies $X_i$) and how they interact under gluing.
However, it was shown in \cite{MWW1} that $\mathcal{S}(S^1 \times B^3, L)$ is locally infinitely generated (i.e. infinitely generated in a particular bidegree) for some links $L\subset \#^k(S^1\times S^2)$, so using gluing results such as \autoref{eq: gluing thm} in this case is likely to be highly non-trivial. Using this approach to calculations of the skein lasagna module of a trisected $4$-manifold with $k=0$ (and $\text{genus}(\Sigma)\geq1 $) is more feasible.

We extend the theory down to dimension two in \autoref{sec:other}, defining bicategories for closed orientable surfaces. A compact oriented $3$-manifold $Y$ with $\partial Y=\Sigma$ gives rise to a module over the bicategory associated with $\Sigma$, and it is shown in \autoref{thm: gluing 3-manifolds} that the invariant of a closed $3$-manifold $Y_1\cup_{\Sigma} Y_2$ is isomorphic to a suitably defined tensor product of the modules assigned to $Y_1$ and $Y_2$.

\begin{remark}
A cornered version of Heegaard Floer homology was developed in \cite{DM, DLM}. We note the difference in dimensions between our setting and the work in these references. Our top-dimensional gluing result applies to cornered $4$-manifolds, and we extend the theory down to dimension two. The gluing results in \cite{DLM} concern $3$-manifolds with codimension-$2$ corners, and the theory is extended to $1$-manifolds. 
\end{remark}

\subsection*{Acknowledgments} 
Our approach was strongly influenced by the ideas and TQFT methods developed in \cite{Walker} and \cite{FNWW}.

We are grateful to Paul Wedrich for numerous conversations and for generously sharing his insight into the subject. We also would like to thank Chris Douglas, Robert Lipshitz, and Mike Willis for helpful conversations. 

SB was supported by the NSF Postdoctoral Research Fellowship DMS-2303143. VK was supported in part by NSF Grant DMS-2405044.

\section{Background and definitions} \label{sec: background}

In this paper we work in the smooth category. Let $\Sigma$ be a fixed closed, oriented surface of genus $g$, and let $I$ denote the unit interval which is taken to be $[-1,1]$. We assume tangles are framed and properly embedded.

We first recall the relevant terminology from skein lasagna theory \cite{MW, MWW2}. Let $Z$ be 
a functorial link invariant in ${\mathbb R}^3$ satisfying the conditions in \cite[Theorem~2.1]{MWW3}.

\begin{definition}[\cite{MWW2}]
    Given an oriented $4$-manifold $X$ with an oriented framed link $L \subset \partial X$, a \textit{lasagna filling} $F$ of $(X,L)$ consists of $(S, \{(B_i, L_i, v_i)\})$, where
    \begin{itemize}
    \setlength{\itemsep}{5pt}
        \item[-] $\{B_i\}$ is a finite collection of disjoint $4$-balls embedded in the interior of $X$, called \textit{input balls},
        \item[-] $L_i$ is an oriented framed link in $\partial B_i$,
        \item[-] $v_i$ is an element in $Z(L_i)$, sometimes called a \textit{homogeneous label}, and
        \item[-] $S$ is an oriented framed surface properly embedded in $X \setminus \bigcup_i B_i$, such that $S \cap \partial X = L$ and $S \cap \partial B_i = L_i$ for each $i$.
    \end{itemize}
\end{definition}

\begin{definition}[\cite{MWW2}]
    Given an oriented $4$-manifold $X$ with an oriented framed link $L \subset \partial X$, its \textit{skein lasagna module} $\mathcal{S}(X,L)$ is the $R$-module generated by the lasagna fillings of $(X,L)$, modulo the transitive and linear closure of the following relation. 
    \begin{itemize}
    \setlength{\itemsep}{5pt}
        \item[-] Linear combinations of lasagna fillings are set to be multilinear in the labels $v_i$.
        \item[-] Two lasagna fillings $F_1$ and $F_2$ are set to be equivalent if $F_1$ has an input ball $B_1$ with label $v_1$, and $F_2$ is obtained from $F_1$ by replacing $B_1$ with a lasagna filling $F_3 = (S', \{(B_{j}, L_{j}, v_{j})\})$ of $(B_1, L_1)$, such that $v_1 = Z (S')(\bigotimes_j v_{j})$, followed by an isotopy rel. boundary. A depiction of this relation can be found in \cite[Section~2.2]{MWW2}.
    \end{itemize}
\end{definition}

Throughout this paper, we will assume that the TQFT $Z$ is the Khovanov-Rozansky $\mathfrak{gl}_N$ homology $KhR_N$, which takes values in $\mathbb{Z}^2$-graded $\mathbb{Z}$-modules. The \textit{bidegree} of a lasagna filling $F = (S, \{(B_i, L_i, v_i)\})$ is given by
$$
\deg(F) = \sum_i \deg(v_i) + (0, (1-N)\chi(S)) \in \mathbb{Z}^2.
$$
Then the skein lasagna module $\mathcal{S}(X,L)$ inherits a $\mathbb{Z}^2$-grading.

We now set up the categorical framework which we will use throughout the rest of the paper. Our work relies on TQFT methods (which apply in much greater generality) developed in \cite{Walker,FNWW}. 
First we discuss a relatively simple gluing map that will be useful for the definition of this category. Let $X_1, X_2$ be oriented $4$-manifolds and $L_1, L_2$ be framed oriented links in the boundary of $X_1, X_2$ respectively. Let $Y$ be a $3$-manifold with boundary, with 
$Y\hookrightarrow \partial X_1$ and $-Y\hookrightarrow \partial X_2$, such that $\partial Y$ has transverse intersections with $L_1$ and $L_2$. Then $\partial Y$ cuts $L_1$ into two tangles $T_2 = L_1 \cap Y$ and $-T_1 = L_1 \cap (\partial X_1 \setminus Y)$, and cuts $L_2$ into two tangles $-T_2' = L_2 \cap Y$ and $T_3 = L_2 \cap (\partial X_2 \setminus Y)$. 

Suppose that $T_2' = T_2$. Then we can glue $(X_1, -T_1 \cup T_2)$ and $(X_2, -T_2 \cup T_3)$ together along $(Y, T_2)$ to obtain $(X_1 \cup X_2, -T_1 \cup T_3)$. A lasagna filling of $(X_1, -T_1 \cup T_2)$ and a lasagna filling of $(X_2, -T_2 \cup T_3)$ glue to create a lasagna filling of $(X_1 \cup X_2, -T_1 \cup T_3)$ in a bilinear way, inducing a map
\begin{equation} \label{eq:naive_gluing_map}
    \textup{glue}_{T_2} \colon \mathcal{S}(X_1, -T_1 \cup T_2) \otimes_{\mathbb{Z}} \mathcal{S}(X_2, -T_2 \cup T_3) \to \mathcal{S}(X_1 \cup X_2, -T_1 \cup T_3).
\end{equation}

We are now in a position to define  categories associated with $3$-manifolds. The following definition was introduced and studied in \cite[Definition 4.5]{MWW1} in the case when the $3$-manifold is the $3$-ball.
Let $Y$ be an oriented, compact $3$-manifold with boundary, and let $\iota \colon \Sigma \to \partial Y$ be a (not necessarily orientation-preserving) diffeomorphism.

\begin{definition} \label{def:bordered-3-manifold-category}
To the $3$-manifold $Y$ and a finite collection of framed points $P \subset \Sigma$ we associate the \textit{$3$-manifold category} $\mathcal{S}(Y, P)$ defined as follows.
\begin{itemize}
\setlength{\itemsep}{5pt}
        \item[-] The objects of $\mathcal{S}(Y, P)$ are oriented framed tangles $T$ properly embedded in $Y$ such that $\partial T = \iota(P)$.
        \item[-] Given two tangles $T, T'$, let $L$ denote the oriented framed link $$(-T' \times \{-1\}) \cup  (\iota(P) \times I) \cup (T \times \{1\})$$ in the boundary of $Y \times I$, as shown in \autoref{fig:3-manifold-category}. Then the morphism set between the tangles $T, T'$ is the $\mathbb{Z}^2$-graded $\mathbb{Z}$-module
        $$
        \Hom_{\mathcal{S}(Y, P)}(T, T') = \mathcal{S}(Y \times I, L).
        $$
\end{itemize}
Composition is modeled on stacking lasagna fillings along the interval factor and is implemented by the gluing map in \autoref{eq:naive_gluing_map}. Note that when $Y$ is a closed $3$-manifold, we must have an empty point set on the boundary. We will denote $\mathcal{S}(Y, \emptyset)$ simply as $\mathcal{S}(Y)$.
\end{definition}

\begin{figure}[ht!]
    \centering
    \includegraphics[width=0.4\linewidth]{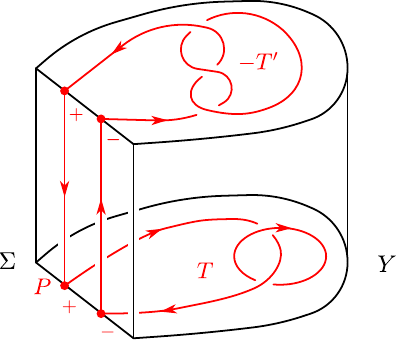}
    \caption{The $4$-manifold $Y \times I$ with the link $L = (-T' \times \{-1\}) \cup (P \times I) \cup (T \times \{1\})$ on its boundary.}
    \label{fig:3-manifold-category}
\end{figure}

\begin{remark} 
         Finite subsets $P\subset \Sigma$ and tangles $T\subset Y$ are considered as submanifolds, as opposed to equivalence classes up to isotopy.
\end{remark}
\begin{remark}
    If the diffeomorphism $\iota \colon \Sigma \to \partial Y$ has no ambiguity in context, we will identify $\Sigma$ and $\partial Y$, and write $\partial Y = \Sigma$ if $\iota$ is orientation-preserving, write $\partial Y = -\Sigma$ if $\iota$ is orientation-reversing.
\end{remark}

\section{The invariant of cornered \texorpdfstring{$4$}{4}-manifolds and gluing formulas} \label{sec: gluing}

In this section we define the invariant of cornered $4$-manifolds and present our gluing theorems. The reader may consult \cite[Section~1.5]{Wall} for the definition and basic properties of manifolds with corners. In \autoref{subsec:two} we consider the case of gluing two distinct, possibly cornered, oriented $4$-manifolds together. Then, motivated by the theory of trisections, in \autoref{subsec:self} we consider what happens when we glue a $4$-manifold to itself in a prescribed way. In \autoref{subsec:trisections}, we discuss an application of our results to the theory of trisections. 

We construct an invariant of oriented, compact $4$-manifolds $X$ with corners, $\partial X = (-Y_1) \cup_{\Sigma} Y_2$, where $Y_1$ and $Y_2$ are two oriented, compact $3$-manifolds with $\partial Y_1 = \partial Y_2 = \Sigma$ (note that we allow the possibility of $\Sigma$ to be empty, giving the case $\partial X = (-Y_1) \sqcup Y_2$). 
Suppose that $X$ is equipped with an orientation-preserving diffeomorphism $\phi \colon (-Y_1) \cup_{\Sigma} Y_2 \to \partial X$.
If we fix a point set $P \subset \Sigma$, then we obtain two categories $\mathcal{S}(Y_1, P)$ and $\mathcal{S}(Y_2, P)$ by \autoref{def:bordered-3-manifold-category}.

\begin{definition} \label{def:cornered-skein-lasagna-bimodule}
To the $4$-manifold $X$ with boundary as described above, we associate the \textit{cornered skein lasagna bimodule}, which is a bifunctor
$$F_{X,P} \colon \mathcal{S}(Y_1, P) \times \mathcal{S}(Y_2,P)^{op} \to \mathcal{V},$$ where $\mathcal{V}$ is the category of $\mathbb{Z}^2$-graded $\mathbb{Z}$-modules. For a pair of tangles $T_1 \in \Obj(\mathcal{S}(Y_1, P))$ and $T_2 \in \Obj(\mathcal{S}(Y_2,P))$, we define
\begin{equation} \label{eq:invariant}
   F_{X,P} (T_1,T_2) = \mathcal{S}(X, \phi(-T_1 \cup T_2)),
\end{equation}
where $\phi(-T_1 \cup T_2)$ is a link in $\partial X$, as shown in \autoref{fig:4-manifold-bimodule}.
For a pair of lasagna fillings $\alpha_1 \in \Hom_{\mathcal{S}(Y_1,P)}(T_1, T_1')$ and $\alpha_2 \in \Hom_{\mathcal{S}(Y_2,P)^{op}}(T_2, T_2')$, we define $F_{X,P}(\alpha_1, \alpha_2)$ to be the linear map induced by concatenating with $(\phi \times I)(\alpha_1 \cup \alpha_2)$. This definition extends bilinearly to linear combinations of lasagna fillings.
\end{definition}

\begin{figure}[ht!]
    \centering
    \includegraphics[width=0.4\linewidth]{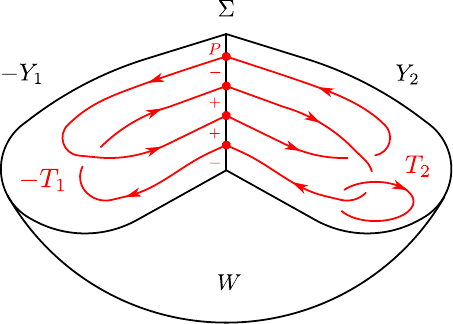}
    \caption{The $4$-manifold $X$ with the link $-T_1 \cup T_2$ on its boundary.}
    \label{fig:4-manifold-bimodule}
\end{figure}

Sometimes we will refer to the bifunctor $F_{X,P}$ as a $(\mathcal{S}(Y_1, P), \mathcal{S}(Y_2,P))$-bimodule.

\begin{remark}
    If the diffeomorphism $\phi \colon (-Y_1) \cup_{\Sigma} Y_2 \to \partial X$ has no ambiguity in context, we will identify $(-Y_1) \cup_{\Sigma} Y_2$ and $\partial X$, and write $\partial X = (-Y_1) \cup_{\Sigma} Y_2$.
\end{remark}

\begin{remark}
    Let $\alpha \in \Hom_{\mathcal{S}(Y_1,P)}(T_1, T_1')$ and $\gamma  \in \Hom_{\mathcal{S}(Y_2,P)^{op}}(T_2, T_2')$. For the linear map $F_{X,P}(\alpha, \gamma) \colon \mathcal{S}(X,\phi(-T_1 \cup T_2)) \to \mathcal{S}(X,\phi(-T_1' \cup T_2'))$ and an element $b \in \mathcal{S}(X,\phi(-T_1 \cup T_2))$, we will abuse notation and simply denote $(F_{X,P}(\alpha, \gamma))(b)$ as $\alpha \cdot b \cdot \gamma$. Furthermore, we will denote ${\id_{T_1}} \cdot b \cdot \gamma$ as $b \cdot \gamma$, and $\alpha \cdot b \cdot {\id_{T_2}}$ as $\alpha \cdot b$.
\end{remark}

\subsection{Gluing two pieces} \label{subsec:two}
Our goal now is to describe how the cornered skein lasagna bimodule behaves under gluing. Let $Y_1, Y_2, Y_3$ be three oriented compact $3$-manifolds with $\partial Y_1 = \partial Y_2 = \partial Y_3 = \Sigma$. Suppose that $X_1, X_2$ are oriented compact $4$-manifolds with $\partial X_1 = (-Y_1)\cup_\Sigma Y_2$ and $\partial X_2 = (-Y_2) \cup_\Sigma Y_3$. We will glue two such $4$-manifolds along $Y_2$ to get $X_1 \cup_{Y_2} X_2$, as depicted in \autoref{fig:gluing1}. 
Given two bimodules $F_{X_1, P} \colon \mathcal{S}(Y_1, P) \times \mathcal{S}(Y_2,P)^{op} \rightarrow \mathcal{V}$ and $F_{X_2, P} \colon \mathcal{S}(Y_2, P) \times \mathcal{S}(Y_3,P)^{op} \rightarrow \mathcal{V}$, we wish to define a tensor product between $F_{X_1, P}$ and $F_{X_2, P}$ to recover the bimodule $F_{X_1 \cup_{Y_2} X_2, P}$.

\begin{figure}[ht!]
\centering
    \begin{minipage}[c]{0.25\linewidth}   
    \includegraphics[width=\linewidth]{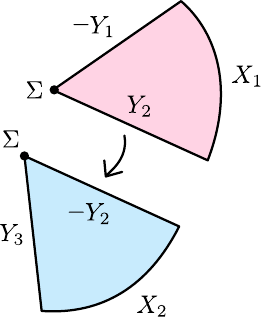}
    \end{minipage}
    \hspace{2em}
    \begin{minipage}[c]{0.5\linewidth}
    \includegraphics[width=\linewidth]{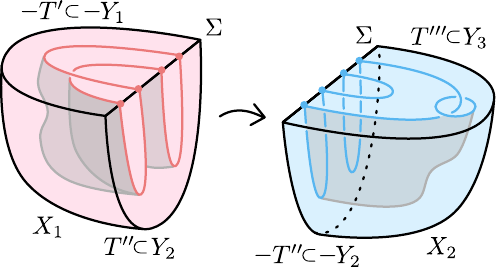}
    \end{minipage}
    \caption{Two schematics for gluing two cornered $4$-manifolds along part of their boundary. The left hand side is useful for a big picture view of what pieces are being glued, while the right hand side (in which all pieces have increased in dimension) is useful for seeing where the points, tangles, and surfaces (which will be relevant in \autoref{def:cornered-skein-lasagna-bimodule}) live. Here, the gray schematically represents lasagna fillings of $(X_1, T' \cup T'')$ and $(X_2,T'' \cup T''')$.
\label{fig:gluing1}}
\end{figure}

Fix an object $T'$ in $\mathcal{S}(Y_1, P)$ and an object $T'''$ in $\mathcal{S}(Y_3, P)$. Consider the equivalence relation $\sim_1$ on
$$
    \bigoplus_{T'' \in \Obj(\mathcal{S}(Y_2, P))} F_{X_1,P}(T', T'') \otimes_{\mathbb{Z}} F_{X_2, P}(T'', T''')
$$
generated by
\begin{equation} \label{eq: equiv rel1}
    (a \cdot \beta) \otimes c \sim_1 a \otimes (\beta \cdot c)
\end{equation}
where $a \in  \mathcal{S}(X_1, -T' \cup \widetilde{T}'')$, $c \in \mathcal{S}(X_2, -T'' \cup T''')$, and $\beta \in \Hom_{\mathcal{S}(Y_2,P)^{op}}(\widetilde{T}'',T'') = \Hom_{\mathcal{S}(Y_2,P)}(T'', \widetilde{T}'')$, for some objects $T'', \widetilde{T}''$ in $\mathcal{S}(Y_2, P)$.
See \autoref{fig:equiv1} for a schematic depiction of this equivalence relation. (Since we will have more equivalence relations in the next subsection, we number each of them for clarity.)

\begin{figure}[ht!]
\includegraphics[width=12cm]{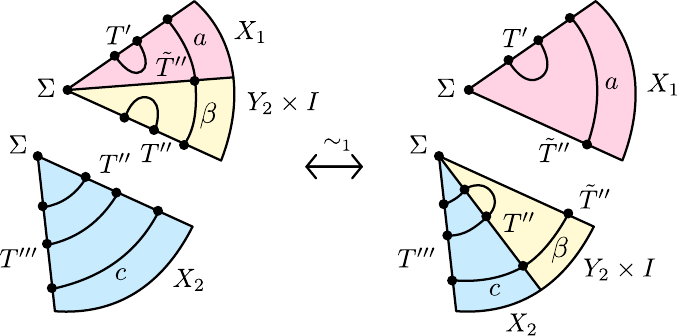}
\centering
\caption{A schematic representing the equivalence relation $(a \cdot \beta) \otimes c \sim_1 a \otimes (\beta \cdot c)$. In words, this relation captures the idea that acting by a morphism on the ``left'' side of $X_1$ or ``right'' side of $X_2$ gives the same result once the two pieces are glued along their common boundary. (Orientations have been omitted from this schematic for the sake of clarity.) \label{fig:equiv1}}
\end{figure}

\begin{definition}
    Given the $(\mathcal{S}(Y_1, P), \mathcal{S}(Y_2,P))$-bimodule $F_{X_1, P}$ and the $(\mathcal{S}(Y_2, P), \mathcal{S}(Y_3,P))$-bimodule $F_{X_2, P}$, we define their \textit{tensor product} over $\mathcal{S}(Y_2, P)$ to be the bifunctor 
    $$F_{X_1, P} \otimes_{\mathcal{S}(Y_2,P)} F_{X_2, P} \colon \mathcal{S}(Y_1, P) \times \mathcal{S}(Y_3,P)^{op} \rightarrow \mathcal{V},$$ 
    where $(F_{X_1, P} \otimes_{\mathcal{S}(Y_2,P)} F_{X_2, P})(T',T''')$ is defined to be 
    $$
    \bigoplus_{T'' \in \Obj
    (\mathcal{S}(Y_2, P))} F_{X_1,P}(T', T'') \otimes_{\mathbb{Z}} F_{X_2, P}(T'', T''') \bigg/ \sim_1.
    $$    
    On morphisms, we define
    $\alpha \cdot (a \otimes b) \cdot \beta = (\alpha \cdot a) \otimes (b \cdot \beta)$, where $\alpha \in \textup{Hom}(T', \widetilde{T}')$.
\end{definition}

Associativity will be discussed after \autoref{lem:lemma1}. 
Recall that by \autoref{eq:invariant}, we have
$$\bigoplus_{\substack{T'' \in \\ \Obj
    (\mathcal{S}(Y_2, P))}} F_{X_1,P}(T', T'') \otimes_{\mathbb{Z}} F_{X_2, P}(T'', T''') =  
\bigoplus_{\substack{\textup{$T''$: tangle in $Y_2$} \\ \partial T'' = P}} \mathcal{S}(X_1, -T' \cup T'') \otimes_{\mathbb{Z}} \mathcal{S}(X_2, -T'' \cup T''').$$
This motivates the following lemma.
    
\begin{lemma} \label{lem:lemma1}
    The map $$\tau \colon \left( \bigoplus_{T''\in \Obj
    (\mathcal{S}(Y_2, P))} \mathcal{S}(X_1, -T' \cup T'') \otimes_{\mathbb{Z}} \mathcal{S}(X_2, -T'' \cup T''') \bigg/ \sim_1 \right) \rightarrow \mathcal{S}(X_1 \cup X_2, -T' \cup T''')$$ sending $[a \otimes c]$ to $a \cup c$ is an isomorphism.
\end{lemma}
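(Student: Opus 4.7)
The plan is to verify well-definedness, surjectivity, and injectivity of $\tau$; injectivity will be the main obstacle, and I would organize the argument by constructing a candidate inverse.

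Well-definedness follows from unpacking the definitions. For $\beta \in \Hom_{\mathcal{S}(Y_2, P)}(T'', \widetilde{T}'')$, the operations $a \cdot \beta$ and $\beta \cdot c$ amount to stacking the lasagna filling representing $\beta \subset Y_2 \times I$ into a collar of $Y_2$ in $X_1$ and $X_2$ respectively, via the gluing map in \autoref{eq:naive_gluing_map}. Under the canonical identification $X_1 \cup (Y_2 \times I) \cup X_2 \cong X_1 \cup_{Y_2} X_2$ that absorbs the bicollar, the lasagna fillings $(a \cdot \beta) \cup c$ and $a \cup (\beta \cdot c)$ become isotopic in $X_1 \cup_{Y_2} X_2$, so $\tau$ descends to the quotient by $\sim_1$. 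Bilinearity is inherited from the bilinearity of the naive gluing map.

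For surjectivity, I would construct a prospective inverse $\rho$ at the level of representatives. Given a lasagna filling $F = (S, \{(B_i, L_i, v_i)\})$ of $(X_1 \cup_{Y_2} X_2, -T' \cup T''')$, a small isotopy supported in a collar of $Y_2$, combined with standard transversality, puts $F$ into a standard position in which no input ball $B_i$ meets $Y_2$ and $S$ meets $Y_2$ transversely in a framed tangle $T''\subset Y_2$ with $\partial T'' = P$. Restricting the data to each side then yields $a \in \mathcal{S}(X_1, -T' \cup T'')$ and $c \in \mathcal{S}(X_2, -T'' \cup T''')$, and one sets $\rho(F) = [a \otimes c]$. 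The composition $\tau \circ \rho$ is the identity on representatives, so it suffices to show that $\rho$ is well defined on $\mathcal{S}(X_1 \cup_{Y_2} X_2, -T' \cup T''')$; this gives both surjectivity and injectivity of $\tau$.

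The main obstacle is showing well-definedness of $\rho$. There are two things to check: (i) independence of the isotopy used to place $F$ in standard position, and (ii) invariance under the defining skein lasagna relations, namely multilinearity in labels and input-ball replacement. For (i), any two standard-position representatives differ by an ambient isotopy that can be decomposed into a piece supported in $X_1$, a piece in $X_2$, and a piece supported in a collar $Y_2 \times I$; the first two preserve the class on each tensor factor, while the collar piece produces a morphism $\beta \in \Hom_{\mathcal{S}(Y_2, P)}(T'', \widetilde{T}'')$ and realizes exactly the $\sim_1$ move $(a \cdot \beta) \otimes c \sim_1 a \otimes (\beta \cdot c)$ of \eqref{eq: equiv rel1}. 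For (ii), both relations can be localized inside a single piece by first using a collar isotopy to push any affected input balls off $Y_2$ into the interior of $X_1$ or $X_2$, at the cost of a $\sim_1$ move. The technically demanding case is an input-ball replacement whose replacing filling contains input balls or surface pieces crossing $Y_2$; I would handle this inductively, applying the standard-position argument one level deeper to the filling inside the input ball and using the functoriality of $KhR_N$ on the collar to identify the resulting reorganization with a single application of $\sim_1$.
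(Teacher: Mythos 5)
Your overall strategy coincides with the paper's: check that $\widehat\tau$ descends to the quotient, build a candidate inverse by isotoping a filling of $X_1\cup_{Y_2}X_2$ into standard position (input balls off $Y_2$, surface transverse to $Y_2$) and cutting along $Y_2$, and then reduce everything to showing the cutting map is well defined.

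The one step that is too loose is your claim in (i) that any ambient isotopy between two standard-position representatives ``can be decomposed into a piece supported in $X_1$, a piece in $X_2$, and a piece supported in a collar $Y_2\times I$.'' As stated this is false: a single isotopy may drag an input ball back and forth across $Y_2$ several times and may create many Morse singularities of $S\cap Y_2$ at different times, so no decomposition into just three region-supported pieces exists. What is true, and what the paper proves, is that there are finitely many \emph{singular events} (an input ball crossing $Y_2$, or a Morse tangency of $S$ with $Y_2$), and by the fragmentation lemma for isotopies the ambient isotopy can be written as a composition of finitely many local isotopies, each supported in a small neighborhood of one singular event; each such local isotopy is then exactly one application of the relation $\sim_1$ via a morphism $\beta$ in $\mathcal{S}(Y_2,P)$. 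You need this fragmentation statement (or an equivalent general-position-in-families argument) to make (i) rigorous; without it the key injectivity step has a gap. On the other hand, your worry in (ii) about replacing fillings that cross $Y_2$ is unnecessary: the input-ball replacement relation takes place entirely inside a single embedded $4$-ball, which can be isotoped (at the cost of $\sim_1$ moves already handled in (i)) into the interior of $X_1$ or $X_2$, so the relation is inherited from $\mathcal{S}(X_1)$ or $\mathcal{S}(X_2)$ with no induction needed.
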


\begin{proof}
Consider the map $\widehat\tau$ which has the same domain as $\tau$ but without taking the quotient by $\sim_1$, and the same codomain as $\tau$. The map $\widehat\tau$ is defined by gluing the skein lasagna fillings,  $\widehat\tau(a\otimes c)=a\cup c$. It was observed in \cite[Proposition 2.3]{RW} that this map, which is the same as the map in \autoref{eq:naive_gluing_map} when the $4$-manifolds in question do not have corners, is surjective. Note that the values of $\widehat\tau$ on $(a \cdot \beta) \otimes c$ and on $a \otimes (\beta \cdot c)$ are both equal to the result of gluing the skein lasagna fillings $a\cup\beta\cup c$ in $X_1\cup (Y_2\times I)\cup X_2$. Therefore $\widehat\tau((a \cdot \beta) \otimes c - a \otimes (\beta \cdot c))=0$ and $\widehat\tau$ descends to a well-defined map $\tau$. 

To prove that it is an isomorphism, we will construct its inverse $\tau^{-1}$. Denote $X:=X_1\cup X_2$ and let $F$ be a skein lasagna filling in $\mathcal{S}(X, -T' \cup T''')$. Let $B$ denote the union of the input balls of $F$, $S$ the union of surfaces in $F$, and ${\mathfrak S}:=S\cup B$. After an isotopy we can assume that ${\mathfrak S}$ is transverse to $Y_2$. Here {\em transverse} means that $B\cap Y_2=\emptyset$, and the surface $S$ in the lasagna filling is transverse to $Y_2$ in the usual sense, as submanifolds of $X$. Transversality can be achieved because $B$ is a neighborhood in $X$ of a finite collection points which can be moved off of $Y_2$, and $B$ can be shrunk by an isotopy  to be disjoint from $Y_2$ as well. Then $(\partial S)\cap Y_2=\emptyset$ and $S\pitchfork Y_2$ is arranged as usual. 

Considering
$$T'':=S\cap Y_2,\; a:=F\cap X_1, \; c:= F\cap X_2, 
$$ 
we send $F\in \mathcal{S}(X, -T' \cup T''')$ to 
$$a\otimes c\in \left( \bigoplus_{\substack{\textup{$T''$}}} \mathcal{S}(X_1, -T' \cup T'') \otimes_{\mathbb{Z}} \mathcal{S}(X_2, -T'' \cup T''')  \right).$$
Here $a=F\cap X_1$ means that the input balls and the surfaces of $a$ are given by ${\mathfrak S}\cap X_1$, and the Khovanov-Rozansky homology labels in $a$ of the boundary links in $\partial B$ are inherited from~$F$. 

Next we show that the results $a\otimes c$, $a'\otimes c'$ of two isotopies of a skein lasagna filling $F$ are equal  modulo the equivalence relation $\sim_1$.
\begin{figure}[h]
\includegraphics[height=3.7cm]{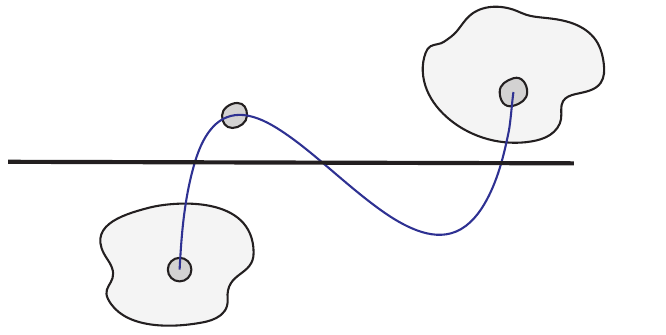}
{\small
\put(-197,61){$Y_2$}
\put(-130,12){$B$}
\put(-169,18){$D$}
\put(-52,35){$\gamma$}
}
\caption{An isotopy of an input ball.}
\label{fig:isotopy}
\end{figure}
Consider two isotopic (in $X$) representatives of $F$, ${\mathfrak S}=S\cup B$ and ${\mathfrak S}'=S'\cup B'$. Pick a point in each input ball $B$; for simplicity of notation suppose $B$ consists of a single ball and denote the point by $p\in B$. 

Let $H$ denote the given ambient isotopy $H\colon X\times I\to X\times I$ taking $B$ to $B'$, and consider the inclusion ${\mathfrak S}\times I\hookrightarrow X\times I$. Consider the restriction of the isotopy $H$ to $\{ p\}\times I$; the path $\gamma$ given by  the track of this isotopy $\{ p\}\times I\to X\times I\to X$ is illustrated in Figure \ref{fig:isotopy}. We can assume that $\gamma$ is transverse to $Y_2$. Let $\{t_i\}$ be a finite collection of times when $\gamma$ intersects $Y_2$.

Consider a small neighborhood of $p$: a $4$-ball $D\subset B$. This neighborhood may be chosen sufficiently small so that near each time $t_i$ the isotopy $H$ moves the ball $D$ from $X_1$ straight across $Y_2$ to $X_2$, or vice versa. 
Consider an ambient isotopy that shrinks $B$ to $D$.
Then local isotopies that take $D$ across $Y_2$ give lasagna fillings that are equivalent under $\sim_1$. Note that when an input ball $B$ is pushed across $Y_2$, all of the surfaces $S$ attached to $B$ are isotoped as well, so the intersection $S\cap Y_2$ changes.

There are finitely many distinct times during the isotopy $H$ when the topology of the intersection of $\mathfrak{S}$ with $X_1$ and with $X_2$ changes. These {\em singular events} consist of the balls $B$ being pushed across $Y_2$ as above, one at a time, and the intersection $S\cap Y_2$ undergoing Morse singularities (away from the boundaries of $S$). 
The final step in the proof is to use fragmentation of $H$ into a finite sequence of isotopies so that each of the finitely many singular events described above is implemented by its own local isotopy. 

The isotopy $H$ has a corresponding vector field $V$ on $X\times I$, $V(x,t)=\frac{\partial}{{\partial }t}H(x,t)$ whose vertical component is $\partial/\partial t$. Rescaling the horizontal component of the vector field by a function $g\colon X\times I \to {\mathbb R}_{\geq 0}$, $V(x,t)\mapsto g(x,t)V(x,t)$ has the effect of reparametrizing the integral curves of the vector field. A singular event takes place in $U \colon ={\mathcal B}\times [t_i-\delta, t_i+\delta]\subset X\times I$ where ${\mathcal B}\subset X$ is a $4$-ball neighborhood of a point in $Y_2$, and $\delta>0$ is small.  
It follows from the fragmentation lemma \cite{Banyaga} that the isotopy $H$ can be represented as a composition of local isotopies $H_i$ supported in neighborhoods as above.
Since $H_i$ is the identity on the complement of $U$, the difference between the tensor products $a\otimes c$ before and after a given singular event differ by $\sim_1$.

Skein lasagna relations involve a collection of input balls engulfed in a single $4$-ball. Since all of the input balls of a given lasagna filling in $X$ may be isotoped into one of the pieces, say $X_1$, the relation in ${\mathcal S}(X)$ is implied by relations in ${\mathcal S}(X_1)$.
\end{proof}

The following theorem summarizes the discussion above, stating a concise tensor product gluing formula.

\begin{theorem} \label{thm:theorem1}
    Let $\Sigma$ be an oriented, closed surface containing a finite set of signed points $P$. Let $Y_1, Y_2, Y_3$ be three oriented, compact $3$-manifolds with $\partial Y_1 = \partial Y_2 = \partial Y_3 = \Sigma$, and $X_1, X_2$ be oriented, compact $4$-manifolds with $\partial X_1 = (-Y_1)\cup_\Sigma Y_2$ and $\partial X_2 = (-Y_2) \cup_\Sigma Y_3$. Then
    \begin{equation} \label{eq: nat iso}
        F_{X_1, P} \otimes_{\mathcal{S}(Y_2,P)} F_{X_2, P} \cong F_{X_1 \cup X_2, P}
    \end{equation}
    as $(\mathcal{S}(Y_1, P), \mathcal{S}(Y_3, P))$-bimodules.
\end{theorem}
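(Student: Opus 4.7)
The plan is to promote the pointwise isomorphism of \autoref{lem:lemma1} to a natural isomorphism of bifunctors, which is the content of \autoref{eq: nat iso}. By construction, the tensor product bifunctor assigns to $(T', T''')$ exactly the quotient $\bigoplus_{T''} \mathcal{S}(X_1, -T' \cup T'') \otimes_{\mathbb{Z}} \mathcal{S}(X_2, -T'' \cup T''')/\sim_1$, while $F_{X_1 \cup X_2, P}$ assigns $\mathcal{S}(X_1 \cup X_2, -T' \cup T''')$. So the maps $\tau_{T', T'''}$ from \autoref{lem:lemma1}, which have already been shown to be isomorphisms of $\mathbb{Z}^2$-graded $\mathbb{Z}$-modules, furnish the required components. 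What remains is to verify that the family $\{\tau_{T',T'''}\}$ is natural in both variables and hence intertwines the bimodule structures.

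To check naturality, I would fix morphisms $\alpha \in \Hom_{\mathcal{S}(Y_1,P)}(T', \widetilde T')$ and $\gamma \in \Hom_{\mathcal{S}(Y_3,P)^{op}}(T''', \widetilde T''')$ and chase an element $[a \otimes c]$ around the resulting square. On the tensor-product side, the action of $(\alpha, \gamma)$ is defined to be $[(\alpha \cdot a) \otimes (c \cdot \gamma)]$, which $\tau_{\widetilde T', \widetilde T'''}$ then sends to the glued lasagna filling $(\alpha \cdot a) \cup (c \cdot \gamma)$ in $X_1 \cup X_2$. Going the other way, $\tau_{T', T'''}$ first sends $[a \otimes c]$ to $a \cup c$, after which the bimodule structure on $F_{X_1 \cup X_2, P}$ concatenates with $\alpha \subset Y_1 \times I$ and $\gamma \subset Y_3 \times I$ along collars of $\partial(X_1 \cup X_2)$. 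Both procedures produce the same lasagna filling in
\[
(Y_1 \times I) \cup_{Y_1} (X_1 \cup X_2) \cup_{Y_3} (Y_3 \times I) \;\cong\; X_1 \cup X_2,
\]
because the collars of $Y_1$ and $Y_3$ are disjoint from the gluing region $Y_2$, so the order of attachment is immaterial. Bilinearity then extends the verification to arbitrary linear combinations.

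The main obstacle is simply the book-keeping of identifications: the tensor-product side encodes the $\mathcal{S}(Y_1,P)$- and $\mathcal{S}(Y_3,P)$-actions internally via the stacking operation of \autoref{def:bordered-3-manifold-category}, whereas the glued side realizes them externally, by concatenation along collars of $\partial(X_1 \cup X_2)$. Once one observes that, under the canonical collar identification, these two operations produce literally the same lasagna filling, the naturality squares for $\alpha$ and $\gamma$ commute, the two actions can be checked independently, and the compatibility with the equivalence relation $\sim_1$ on inner $Y_2$-intersections is automatic because $\sim_1$ is already built into the definition of $\tau$. This upgrades \autoref{lem:lemma1} to the bimodule isomorphism asserted in \autoref{eq: nat iso}.
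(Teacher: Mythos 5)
Your proposal is correct and follows essentially the same route as the paper's proof: it invokes \autoref{lem:lemma1} for the pointwise isomorphisms $\tau_{T',T'''}$ and then verifies naturality by chasing $[a\otimes c]$ around the square, observing that both compositions yield the glued filling $\alpha\cup a\cup c\cup\gamma$. The extra remarks about collar identifications and the disjointness of the $Y_1$, $Y_3$ collars from the gluing region $Y_2$ are a harmless elaboration of the same computation.
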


\begin{proof}
    We will construct the natural isomorphism between the bifunctors in \autoref{eq: nat iso}.
    By definition, $$F_{X_1 \cup X_2, P}(T',T''') = \mathcal{S}(X_1 \cup X_2, -T' \cup T''')$$
    for any $T' \in \textup{Obj}(\mathcal{S}(Y_1,P))$ and $T''' \in \textup{Obj}(\mathcal{S}(Y_3,P))$. By \autoref{lem:lemma1}, the map
    $$\tau \colon (F_{X_1, P} \otimes_{\mathcal{S}(Y_2,P)} F_{X_2, P})(T',T''') \rightarrow F_{X_1 \cup X_2}(T',T''')$$ sending $a \otimes c$ to $a \cup c$ is an isomorphism. Here the map $\tau$ depends on the objects $T'$ and $T'''$, so we will denote it $\tau_{T', T'''}$.

    To prove the naturality of $\{\tau_{T', T'''}\}$, we need to check that the following diagram is commutative:
    \[\begin{tikzcd}
    	{F_{X_1, P} \otimes F_{X_2, P}(T', T''')} & {F_{X_1 \cup X_2}(T', T''')} \\
    	{F_{X_1, P} \otimes F_{X_2, P}(\widetilde{T}', \widetilde{T}''')} & {F_{X_1 \cup X_2}(\widetilde{T}', \widetilde{T}''')}
    	\arrow["\tau_{T', T'''}", from=1-1, to=1-2]
    	\arrow["{\alpha \cdot (-) \cdot \gamma}"', from=1-1, to=2-1]
    	\arrow["{\alpha \cdot (-) \cdot \gamma}", from=1-2, to=2-2]
    	\arrow["{\tau_{\widetilde{T}',\widetilde{T}'''}}"', from=2-1, to=2-2]
    \end{tikzcd}\]
    where $\widetilde{T}' \in \textup{Obj}(\mathcal{S}(Y_1,P))$, $\widetilde{T}''' \in \textup{Obj}(\mathcal{S}(Y_3,P))$, and $\alpha$ is a lasagna filling in $\textup{Hom}_{\mathcal{S}(Y_1,P)}(T', \widetilde{T}')$, $\gamma$ is a lasagna filling in $\textup{Hom}_{\mathcal{S}(Y_3,P)^{op}}(T''', \widetilde{T}''')$.
    
    Indeed, for any $a \otimes c \in F_{X_1, P} \otimes F_{X_2, P}(T', T''')$, we have $\tau_{\widetilde{T}',\widetilde{T}'''}(\alpha \cdot (a \otimes c) \cdot \gamma) = \tau_{\widetilde{T}',\widetilde{T}'''}((\alpha \cdot a) \otimes (c \cdot \gamma)) = \tau_{\widetilde{T}',\widetilde{T}'''}((\alpha \cup a) \otimes (c \cup \gamma)) = \alpha \cup a \cup c \cup \gamma$, and $\alpha \cdot (\tau_{T', T'''}(a \otimes c)) \cdot \gamma = \alpha \cdot (a \cup c) \cdot \gamma = \alpha \cup a \cup c \cup \gamma$.
\end{proof}

\begin{remark}
Considering the special case where $\Sigma=\emptyset$,  \autoref{thm:theorem1} gives a gluing formula for $4$-manifolds with boundary (without corners). 
Some important special cases of such gluing have been previously considered in the literature, including for example the case of attaching $2$-handles to the $4$-ball \cite{MN} and more general handle decompositions \cite{MWW1}.
\end{remark}

\begin{remark}
It follows from \autoref{thm:theorem1} that the tensor product of bimodules associated with cornered $4$-manifolds is associative.
\end{remark}

\subsection{Self-gluing} \label{subsec:self}
Our next two gluing formulas, \autoref{thm:theorem2} and \autoref{thm:theorem3}, were originally motivated by the theory of trisections; see \autoref{subsec:trisections} for more details. However we present our results here in a more general setting. 

Given an oriented, compact $3$-manifold $Y$ with $\partial Y = \Sigma$ and a fixed point set $P \subset \Sigma$, we obtain a category $\mathcal{S}(Y, P)$ by \autoref{def:bordered-3-manifold-category}. Suppose that $X$ is an oriented, compact $4$-manifold with an explicit diffeomorphism $\phi \colon (-Y) \cup_{\Sigma} Y \to \partial X$. Then \autoref{def:cornered-skein-lasagna-bimodule} gives us the cornered skein lasagna $(\mathcal{S}(Y, P), \mathcal{S}(Y, P))$-bimodule
$$
    F_{X, P} \colon \mathcal{S}(Y,P) \times \mathcal{S}(Y,P)^{op} \to \mathcal{V}
$$
sending $(T',T'')$ to $\mathcal{S}(X, \phi(-T' \cup  T''))$.
We glue $X$ to itself along $Y$, and define $\bar{X} := X\big/ \{\phi(y) \sim \phi(-y)\}$ for $y \in Y$. See \autoref{fig:gluing2}. Our next goal is to study the effect of this self-gluing on the bimodule.

\begin{figure}[ht!]
\includegraphics[width=9cm]{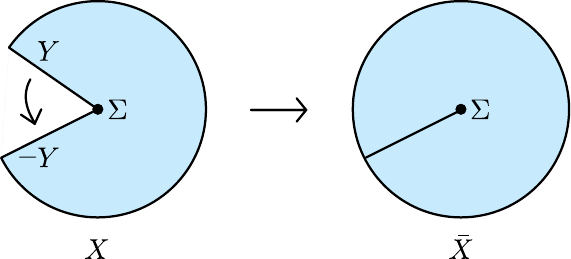}
\centering
\caption{A schematic for self-gluing a cornered $4$-manifold along its boundary. \label{fig:gluing2}}
\end{figure}

Consider the equivalence relation $\sim_2$ on
$$
    \bigoplus_{T \in \Obj(\mathcal{S}(Y, P))} F_{X_,P}(T, T)
$$
generated by
$$
    a \cdot \beta \sim_2 \beta \cdot a
$$
where $a \in F_{X,P}(T, \widetilde{T})$ and $\beta \in \Hom_{\mathcal{S}(Y,P)^{op}}(\widetilde{T},T) = \Hom_{\mathcal{S}(Y,P)}(T,\widetilde{T})$ for some object $\widetilde{T}$ in $\mathcal{S}(Y,P)$. See \autoref{fig:equiv2} for a schematic depiction of this equivalence relation.

\begin{figure}[ht!]
\hspace*{-.5in}
\includegraphics[width=11cm]{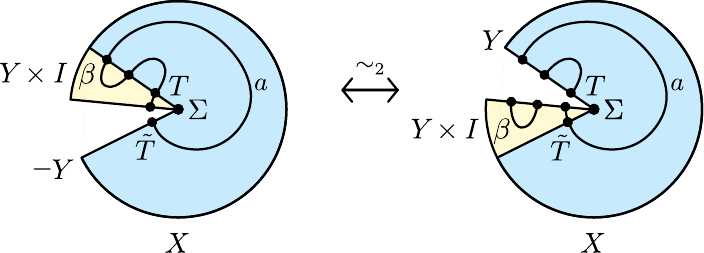}
\centering
\caption{A schematic representing the equivalence relation $a \cdot \beta \sim_2 \beta \cdot a$. In words, this relation captures the idea that acting by a morphism on the ``left'' or ``right'' side of $X$ gives the same result once the manifold is self-glued along its boundary. \label{fig:equiv2}}
\end{figure}

\begin{definition} \label{def:self}
    Given the $(\mathcal{S}(Y, P), \mathcal{S}(Y, P))$-bimodule $F_{X,P}$, we define its \textit{zero\textsuperscript{th} Hochschild homology} to be the $\mathbb{Z}^2$-graded $\mathbb{Z}$-module
    $$
        \textup{HH}_0(F_{X, P}) = \bigoplus_{T \in \Obj(\mathcal{S}(Y, P))} F_{X_,P}(T, T) \bigg/ \sim_2.
    $$
\end{definition}

Our process for self-gluing will now proceed in several steps. We will remove a neighborhood $\nu(\Sigma)$ of the surface $\Sigma$ first, then self-glue, and then fill $\Sigma$ back in. Without the added step of removing and gluing back in a neighborhood of $\Sigma$, we would be forced to require lasagna fillings to intersect $\Sigma$ in a point set $P$.
When we remove a neighborhood of $\Sigma$ with a given point set $P \subset \Sigma$, the points on $\Sigma$ will be exchanged for $|P|$ meridional circles on the boundary. For convenience we will identify $P \times S^1 \subset \partial (\Sigma \times D^2)$ and its image under a diffeomorphism $\Sigma \times D^2\to \nu(\Sigma)$.

Let $q \colon X \rightarrow \bar{X}$ be the quotient map given by self-gluing along $\phi$. Let $X^0 \vcentcolon = X \setminus (\Sigma \times D^2_{\angle})$ and $\bar{X}^0 \vcentcolon = \bar{X} \setminus (\Sigma \times D^2)$, where $D^2_{\angle} \subset X$ is the preimage of the fiber $D^2$ of the normal bundle $\Sigma \times D^2$ of $\Sigma\subset  \bar{X}$. Note that $\partial \bar{X}^0 \cong \Sigma \times S^1$. See \autoref{fig:notation} for a summary of the notation for our $4$-manifold at various stages of removing and gluing.

\begin{figure}[ht!]
\includegraphics[width=15cm]{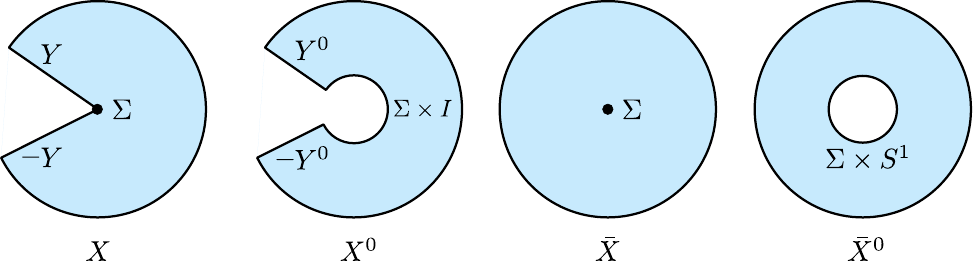}
\centering
\caption{A schematic representing $X$, $X^0$, $\bar{X}$, and $\bar{X}^0$. Here, a ``bar'' means that the manifold has been glued along $Y$, and a ``zero'' means that a neighborhood of the surface $\Sigma$ has been removed. By $Y^0$, we mean $Y\setminus ((\Sigma \times D^2_{\angle})\cap Y)$.
\label{fig:notation}}
\end{figure}

Recall from \autoref{eq:invariant} that 
$F_{X,P} (T,T) = \mathcal{S}(X, \phi(-T \cup T))$. Replacing $X$ with $X^0$ amounts to removing a regular neighborhood $\nu(\Sigma)$ of $\Sigma$ in $X$. The inclusion $X^0\subset X$ induces an isomorphism of the skein lasagna modules, as follows. The boundary link $\phi(-T \cup T)$ is modified near each point $x\in P\subset \Sigma$ by replacing the two radial segments of $\{ x\}\times \partial D^2_{\angle}$ with $\{ x\}$ times the circular arc in $\partial D^2_{\angle}$. Denote the modified link by $\phi(-T' \cup A \cup T')$, where $A$ are the circular arcs. The boundary of any skein lasagna filling $F$ in $\mathcal{S}(X, \phi(-T \cup T))$ intersects $\Sigma$ in $P$, and the relations in the skein lasagna module may be assumed to miss $\nu(\Sigma)$. Removing $F\cap (\Sigma\times D^2_{\angle})$ gives a skein lasagna filling  $F^0$ in $\mathcal{S}(X^0, \phi(-T' \cup A \cup T'))$.

\begin{lemma} \label{lem:lemma2}
    In the notation as above, let $F^0_{T'}$ denote the skein lasagna filling in $\mathcal{S}(\bar{X}^0,P \times S^1)$ obtained by identifying the two copies of $T'$. The map
    $$
    \rho \colon \bigg( \bigoplus_{T \in \Obj(\mathcal{S}(Y, P))} F_{X_,P}(T, T) \bigg/ \sim_2  \bigg) \to  \mathcal{S}(\bar{X}^0,P \times S^1)
    $$
    sending $F$ to $F^0_{T'}$ is an isomorphism.
\end{lemma}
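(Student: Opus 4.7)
The plan is to model the proof on \autoref{lem:lemma1}, adapted to the self-gluing setting. First I would check well-definedness of $\rho$ on the quotient by $\sim_2$. Given $a \in F_{X,P}(T,\widetilde{T})$ and $\beta \in \Hom_{\mathcal{S}(Y,P)}(T,\widetilde{T})$, both $a\cdot\beta \in F_{X,P}(T,T)$ and $\beta\cdot a \in F_{X,P}(\widetilde{T},\widetilde{T})$ become, after removing $\nu(\Sigma)$ and self-gluing via $\phi$, the same lasagna filling in $\bar{X}^0$: namely, the filling obtained by concatenating $a$ with the cylinder representative of $\beta$ across the image of $Y^0\subset\bar X^0$. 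Hence $\rho(a\cdot\beta)=\rho(\beta\cdot a)$ and $\rho$ descends to the quotient.

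Next I would produce a candidate inverse $\rho^{-1}$. Given $G\in\mathcal{S}(\bar{X}^0,P\times S^1)$ with filling data $\mathfrak{S}=S\cup B$, isotope $\mathfrak S$ to be transverse to $q(Y^0)\subset\bar{X}^0$ so that the input balls are disjoint from $q(Y^0)$ and $S\pitchfork q(Y^0)$. Set $T':=S\cap q(Y^0)$; cutting $G$ along $q(Y^0)$ produces a lasagna filling $a\in \mathcal{S}(X^0,\phi(-T'\cup A\cup T'))$, which re-fills uniquely to an element of $F_{X,P}(T,T)$ where $T$ is the tangle obtained by joining $T'$ to $P\subset\Sigma$ along the radial segments. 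Send $G$ to the class $[a]$. The map sending a filling to its glued image is surjective (again by the argument of \cite[Proposition~2.3]{RW} applied across $q(Y^0)$), so this candidate inverse is defined on all of $\mathcal{S}(\bar{X}^0,P\times S^1)$ once we verify it is independent of the transverse representative chosen.

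Independence is the main obstacle, and would be handled exactly as in \autoref{lem:lemma1} via the fragmentation lemma of \cite{Banyaga}. Any ambient isotopy between two transverse representatives is decomposed into local isotopies supported near finitely many singular events where either an input ball crosses $q(Y^0)$ or the surface $S\cap q(Y^0)$ undergoes a Morse singularity. A local isotopy that pushes a small input ball $D$ across $q(Y^0)$ corresponds, after lifting to $X^0$, to moving $D$ from one side of $Y$ to the other; the two resulting fillings in the direct sum differ precisely by the generating relation $a\cdot\beta\sim_2\beta\cdot a$ (with $\beta$ the cylinder swept by the attaching region). Morse events on $S\cap q(Y^0)$ are local and can be absorbed into either side, yielding the same class modulo $\sim_2$. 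The skein lasagna relations involve input balls that may be engulfed in a single $4$-ball disjoint from $q(Y^0)$, so they descend from relations on one side, as in \autoref{lem:lemma1}.

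Finally I would verify that the two constructions are mutual inverses: one direction is immediate from the definition of $\rho$; the other follows because cutting a glued filling $\rho([F])$ along $q(Y^0)$ and rejoining recovers $F$ up to an isotopy of the sort just analyzed, hence up to $\sim_2$. Together these steps establish the isomorphism.
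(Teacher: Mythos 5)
Your proposal is correct and follows exactly the route the paper takes: the paper's own proof of this lemma consists of a single sentence deferring to the argument of \autoref{lem:lemma1}, and your write-up simply spells out that analogous construction (well-definedness under $\sim_2$, cutting along $q(Y^0)$ to build the inverse, and independence of the transverse representative via the fragmentation argument, with ball-crossings of $q(Y^0)$ realizing the relation $a\cdot\beta\sim_2\beta\cdot a$). No discrepancies to report.
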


\begin{proof}
The construction of the inverse map is directly analogous to the proof of \autoref {lem:lemma1}.
\end{proof}

The following theorem gives a concise statement of the self-gluing formula.

\begin{theorem} \label{thm:theorem2}
    Let $\Sigma$ be an oriented, closed surface containing a finite set of signed points $P$. Let $Y$ be an oriented, compact $3$-manifold with $\partial Y = \Sigma$, and $X$ be an oriented, compact $4$-manifold with $\partial X = (-Y) \cup_{\Sigma} Y$. Then
    $$
    \textup{HH}_0(F_{X, P})\cong \mathcal{S}(\bar{X}^0,P \times S^1)
    $$
    as $\mathbb{Z}^2$-graded $\mathbb{Z}$-modules.
\end{theorem}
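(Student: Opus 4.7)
The statement follows directly from \autoref{lem:lemma2} combined with \autoref{def:self}. My plan is as follows. By \autoref{def:self} the $\ZZ^2$-graded module $\textup{HH}_0(F_{X,P})$ is, by definition, the quotient $\bigoplus_T F_{X,P}(T,T)/\sim_2$ appearing as the domain of the map $\rho$ in \autoref{lem:lemma2}. Under this identification the claimed isomorphism is precisely $\rho$, and \autoref{lem:lemma2} supplies the required bijection of underlying $\ZZ$-modules. So structurally the theorem is essentially a repackaging of \autoref{lem:lemma2}.

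It remains to verify that $\rho$ respects the $\ZZ^2$-grading. This is a routine tracking of the bidegree $\deg(F) = \sum_i \deg(v_i) + (0, (1-N)\chi(S))$ through the two operations used to build $F^0_{T'}$ from $F$: delete $S \cap (\Sigma \times D^2_\angle)$, and then identify the two boundary copies of $T'$. Both operations leave the input balls and their homogeneous labels $v_i$ untouched, so the $\sum_i \deg(v_i)$ contribution is preserved. For the Euler-characteristic contribution, a short Mayer--Vietoris computation yields $\chi(S^0) = \chi(S)$ (each deleted half-disk cancels against the arc of new boundary it introduces on $\partial D^2_\angle$), and the identification step changes the Euler characteristic by $-\chi(T')$, which is accounted for by the grading convention on the summand $F_{X,P}(T,T) \subset \textup{HH}_0(F_{X,P})$.

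The real work, and what I expect to be the main obstacle, has already been packaged into \autoref{lem:lemma2}, whose proof is only indicated as ``directly analogous'' to that of \autoref{lem:lemma1}. The crux is constructing the inverse of $\rho$: given $F \in \mathcal{S}(\bar X^0, P \times S^1)$, isotope so that the surface and input balls are transverse to a pushed-in copy of $Y^0 \hookrightarrow \bar X^0$, cut along $Y^0$ to produce a tangle $T := S \cap Y^0 \in \Obj(\mathcal{S}(Y,P))$ together with an element of $F_{X,P}(T,T)$, and then apply the fragmentation argument of \autoref{lem:lemma1} to show that any two transverse representatives give elements that agree modulo $\sim_2$. The new feature here, relative to \autoref{lem:lemma1}, is that one is self-gluing a single $4$-manifold along a single copy of $Y^0$ rather than gluing two distinct pieces along a common boundary; the relation $\sim_2$ is precisely designed to absorb the singular events (an input ball dragged across $Y^0$, or a Morse critical point of $S$ on $Y^0$) that occur during such isotopies, so once this bookkeeping is complete the theorem follows.
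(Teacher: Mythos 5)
Your proposal is correct and matches the paper's own proof, which likewise consists of unpacking \autoref{def:self} and invoking \autoref{lem:lemma2} to supply the isomorphism $F \mapsto F^0_{T'}$. You in fact go slightly further than the paper by sketching the inverse construction for \autoref{lem:lemma2} (cutting along a pushed-in copy of $Y^0$ and using the fragmentation argument to handle the singular events via $\sim_2$) and by flagging the bidegree bookkeeping, neither of which the paper spells out.
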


\begin{proof}
    By definition, $$ \textup{HH}_0(F_{X, P}) =   \bigoplus_{T \in \Obj(\mathcal{S}(Y, P))} F_{X_,P}(T, T) \bigg/ \sim_2 .$$
    Thus we have a map
    $$\textup{HH}_0( F_{X, P} ) \rightarrow \mathcal{S}(\bar{X}^0,P \times S^1)$$
    sending $F$ to $F^0_{T'}$, which is an isomorphism by \autoref{lem:lemma2} as desired.
\end{proof}

Finally, we glue back in the neighborhood $\Sigma \times D^2$ -- in other words, we go from $\bar{X}^0$ to $\bar{X}$ -- and describe the result on our invariant. This will require the definition of a third equivalence relation, which will be generated by two equivalencies.

Let $P, P'$ be two finite sets of signed points in $\Sigma$ of the same cardinality.  Let $b$ be an oriented braid properly embedded in $\Sigma \times I$ from $P$ to $P'$, meaning $\partial b = (-P) \times \{-1\} \cup P' \times \{1\}$. Then $b \times S^1$ is an oriented surface in $\Sigma \times S^1 \times I$, with boundary $(-P \times S^1) \times \{-1\} \cup (P' \times S^1) \times \{1\}$. By concatenating with the lasagna fillings of $(\bar{X}^0, P \times S^1)$, we obtain a linear map
$$
\beta(b) \colon \mathcal{S}(\bar{X}^0, P \times S^1) \to \mathcal{S}(\bar{X}^0, P' \times S^1).
$$

On the other hand, suppose $p_+, p_- \in \Sigma \setminus P$ are a pair of points with opposite signs. Then $\{p_+, p_-\} \times S^1$ bounds an annulus $A_{p_+, p_-}$ in $\bar{X}^0$. For any lasagna filling $F$ of $(\bar{X}^0, P \times S^1)$, we can take the connected sum $F \# A_{p_+, p_-}$ between any component of $F$ and $A_{p+,p_-}$. Note that $F \# A_{p_+, p_-}$ is a lasagna filling of $(\bar{X}^0, (P \cup \{p_+, p_-\}) \times S^1)$. 

Now consider the equivalence relation $\sim_3$ on
$$
\bigoplus_P \mathcal{S}(\bar{X}^0,P \times S^1)
$$
generated by
$$
\beta(b)(F) \sim_3 F, \quad F \# A_{p_+, p_-} \sim_3 F
$$
for any $F \in \mathcal{S}(\bar{X}^0,P \times S^1)$, any braid $b$ in $\Sigma \times I$ from $P$ to some $P' \sim P$,  and any pair of points $p_+, p_- \in \Sigma \setminus P$ with opposite signs. See \autoref{fig:equiv3} for a schematic depiction of this equivalence relation.

\begin{figure}[ht!]
\includegraphics[width=12cm]{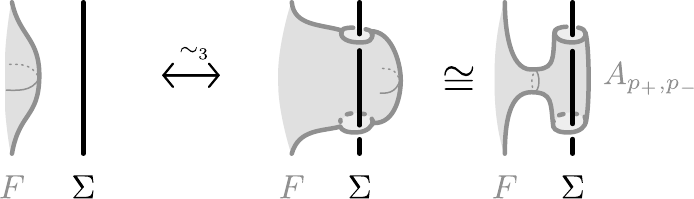}
\centering
\caption{A schematic representing the equivalence relation $ F \# A_{p_+, p_-} \sim_3 F$. In words, the relation accounts for the possibility of a lasagna filling $F$ being isotoped to intersect the central surface $\Sigma$ in a pair of points with opposite sign. This isotopy can be thought of as performing a connected sum between the lasagna filling $F$ and the annulus $A_{p_+, p_-}$.
\label{fig:equiv3}}
\end{figure}

\begin{theorem} \label{thm:theorem3}
   There is an isomorphism
    \[\big(\bigoplus_P \mathcal{S}(\bar{X}^0,P \times S^1) \big/ \sim_3\big) \cong \mathcal{S}(\bar{X}).\]
\end{theorem}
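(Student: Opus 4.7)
The plan is to construct mutually inverse maps between the two sides in the spirit of \autoref{lem:lemma1} and \autoref{lem:lemma2}. The forward direction caps off meridional circles with the normal fiber disks of $\Sigma$ in $\bar X$, while the inverse intersects a lasagna filling of $\bar X$ transversely with $\Sigma$ and deletes a tubular neighborhood.

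First I would define
\[
\Phi\colon \bigoplus_P \mathcal{S}(\bar X^0,P\times S^1)\longrightarrow \mathcal{S}(\bar X)
\]
as follows: given $F\in \mathcal{S}(\bar X^0, P\times S^1)$, each meridional boundary component $\{p\}\times S^1 \subset \partial\bar X^0$ bounds the canonical normal fiber disk $\{p\}\times D^2\subset\nu(\Sigma)\subset \bar X$, and $\Phi(F)$ is obtained by capping off the surface part of $F$ with these disks while carrying over the input balls and their labels unchanged.

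Next I would check that $\Phi$ descends through $\sim_3$. For the braid relation $\beta(b)(F)\sim_3 F$, the braid $b\subset\Sigma\times I$ from $P$ to $P'$ extends radially through the disk fibers to an ambient isotopy of $\bar X$ supported in $\nu(\Sigma)$ that carries $\Phi(F)$ to $\Phi(\beta(b)(F))$. For the annulus relation $F\# A_{p_+,p_-}\sim_3 F$, choose an arc $\alpha\subset\Sigma$ from $p_+$ to $p_-$; the solid region $\alpha\times D^2\subset \nu(\Sigma)$ is a $3$-ball whose boundary $2$-sphere $S$ is precisely $A_{p_+,p_-}\cup(\{p_+\}\times D^2)\cup(\{p_-\}\times D^2)$, with the opposite signs of $p_\pm$ ensuring the two fiber disks glue coherently to the annulus. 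Hence $\Phi(F\# A_{p_+,p_-})=\Phi(F)\# S$ where $S$ bounds a $3$-ball in $\bar X$, so it is ambient isotopic to $\Phi(F)$.

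For the inverse, given $G\in\mathcal{S}(\bar X)$ I would push the input balls off $\Sigma$ and isotope the surface of $G$ to be transverse to $\Sigma$; then $P:=G\cap\Sigma$ is a finite signed point set and $G\cap \bar X^0$ is a filling of $(\bar X^0, P\times S^1)$, defining the candidate $\Psi(G)$. The main obstacle, and the technical heart of the proof, is showing that $\Psi$ is well-defined modulo $\sim_3$; this is exactly the fragmentation argument from \autoref{lem:lemma1} applied to an ambient isotopy $H$ between two transverse representatives. Fragmenting $H$ into isotopies supported in small $4$-balls, the singular events at which $G\cap\Sigma$ changes topology come in two local types: (i) isotopies of $P$ inside $\Sigma$, encoded by braids $b$ and producing the relation $\beta(b)(F)\sim_3 F$; and (ii) Morse-type births and deaths of cancelling pairs of opposite-signed intersection points of the surface with $\Sigma$, which locally adjoin or remove an annulus between two new meridional circles and produce $F\# A_{p_+,p_-}\sim_3 F$. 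Input balls can be kept disjoint from $\nu(\Sigma)$, and since lasagna skein relations occur inside a single engulfing $4$-ball they may be arranged to miss $\nu(\Sigma)$ and hence descend to relations on the left-hand side. By construction $\Psi\circ\Phi$ and $\Phi\circ\Psi$ are the identity on representatives up to isotopies that shrink the caps to their meridional circles, completing the isomorphism.
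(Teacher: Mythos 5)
Your proof is correct and follows essentially the same route as the paper: cap off $P\times S^1$ with the meridional disks $P\times D^2$ for the forward map, obtain surjectivity by transversality to $\Sigma$, and establish well-definedness of the inverse by analyzing the singular events (braiding of intersection points and births/deaths of oppositely signed pairs) during an ambient isotopy. The paper delegates that last step to an argument analogous to \cite[Theorem~1.1]{MN}, whereas you spell it out via the fragmentation argument of \autoref{lem:lemma1} and also make explicit the descent of the capping map through $\sim_3$; these are welcome elaborations rather than deviations.
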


\begin{proof}
    Consider the map   $\phi\colon\bigoplus_P \mathcal{S}(\bar{X}^0,P \times S^1) \to \mathcal{S}(\bar{X})$ sending a lasagna filling $F$ with boundary $P\times S^1\subset \Sigma\times S^1$ to a lasagna filling in $\bar{X}$ obtained from $F$ by capping it off with meridional disks $P\times D^2$. 
   This map is surjective: starting with any lasagna filling $F'$ in $\mathcal{S}(\bar{X})$, make it transverse to $\Sigma\times\{0\}\subset \Sigma\times D^2\subset \bar{X}$ so the input balls are disjoint from $\Sigma$, and the surfaces in $F$ intersect $\Sigma$ in a finite collection of points $P$. Intersecting with the complement of an open tubular neighborhood of $\Sigma$, we obtain a filling $F$ in $\mathcal{S}(\bar{X}^0,P \times S^1)$ which is sent to $F'$ by $\phi$. An argument analogous to the proof of \cite[Theorem~1.1]{MN} shows that isotopic representatives of $F'$ are sent to elements related by $\sim_3$. 
\end{proof}

\begin{remark} The effect on the skein lasagna module of filling in $\Sigma\times D^2$ to obtain $\bar{X}$ from $\bar{X}^0$ can also be understood in terms of attaching $2, 3, 4$-handles as in \cite{MWW1}.
\end{remark}

\subsection{Application to trisections} \label{subsec:trisections}

Trisections, due to Gay and Kirby \cite{GK}, are decompositions of $4$-manifolds which are analogous to Heegaard splittings of $3$-manifolds.

\begin{definition}[\cite{GK}]
    A \textit{$(g;k_1,k_2,k_3)$-trisection} of a closed, connected, oriented, smooth $4$-manifold $X^4$ is a decomposition $X^4 = X_1 \cup X_2 \cup X_3$ satisfying:
    \begin{enumerate}
        \item each $X_i$ is diffeomorphic to $\natural_{k_i} (S^1 \times B^3)$, that is, a $4$-dimensional $1$-handlebody $Z_{k_i}$,
        \item each $Y_{i, i+1}:=X_i \cap X_{i+1}$ (taking $i \pmod 3$) is diffeomorphic to $\natural_g (S^1 \times B^2)$, that is, a genus $g$ $3$-dimensional handlebody $H_g$, and
        \item the triple intersection $X_1 \cap X_2 \cap X_3$ is a closed, oriented, genus $g$ surface $\Sigma_g$, which is referred to as the \textit{central surface}.
    \end{enumerate}
This decomposition is depicted schematically in \autoref{fig:trisection} (left).
\end{definition}

\begin{figure}[ht!]
\centering
    \begin{minipage}[c]{0.25\linewidth}   
    \includegraphics[width=\linewidth]{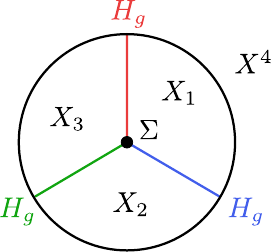}
    \end{minipage}
    \hspace{2em}
    \begin{minipage}[c]{0.25\linewidth}
    \includegraphics[width=\linewidth]{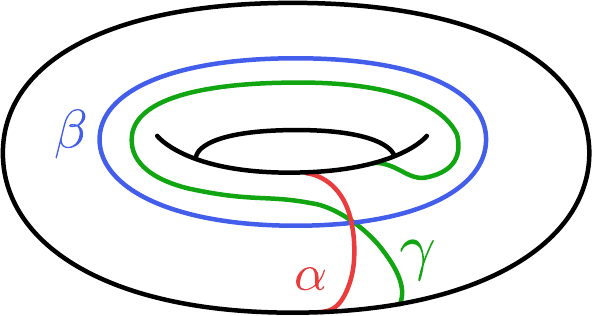}
    \end{minipage}
    \caption{
    On the left is a schematic depiction of a trisection $X^4 = X_1 \cup X_2 \cup X_3$. On the right is a trisection diagram for $\mathbb{CP}^2$.
\label{fig:trisection}}
\end{figure}

\autoref{thm:theorem1} allows us to glue two trisection pieces together along a genus $g$ handlebody, so in total we can glue three pieces to get a $4$-manifold $X'$ with boundary $\partial X \cong -H_g \cup_{\Sigma_g} H_g$, where $H_g$ is a genus $g$ handlebody. The step that remains, in order to build the closed, trisected $4$-manifold $X$, is to glue $X'$ to itself by identifying $-H_g$ and $H_g$. We can accomplish this with \autoref{thm:theorem2} and \autoref{thm:theorem3}.

However, to define the categories corresponding to the $3$-manifolds $Y_{i, i+1}$ (\autoref{def:bordered-3-manifold-category}), we need to specify the diffeomorphisms $\iota_{i,i+1} \colon \Sigma_g \to H_g$. Similarly, to obtain the bimodules corresponding to the cornered $4$-manifolds $X_i$ (\autoref{def:cornered-skein-lasagna-bimodule}), we need to determine the diffeomorphisms $\phi_i \colon -H_g \cup_{\Sigma_g} H_g \to Z_{k_i}$. Next we will describe how to extract the data of these diffeomorphisms from trisection diagrams.

\begin{definition}
    A \textit{$(g;k_1,k_2,k_3)$-trisection diagram} is a tuple $(\Sigma_g;\alpha,\beta,\gamma)$ such that:
    \begin{itemize}
    \setlength{\itemsep}{5pt}
        \item[-] each of $\alpha, \beta$, and $\gamma$ is a cut system of curves for $\Sigma_g$, and
        \item[-] each of $(\Sigma_g;\alpha,\beta), (\Sigma_g;\beta,\gamma)$, and $(\Sigma;\gamma,\alpha)$ is a genus $g$ Heegaard diagram for $\#^{k_i}S^1 \times S^2$ (respectively).
    \end{itemize}
See \autoref{fig:trisection} (right) for an example. We refer the readers to \cite{GK} for a detailed discussion of the relationship between trisections and trisection diagrams.
\end{definition}

Given a trisection diagram $(\Sigma_g;\alpha,\beta,\gamma)$ of $X$, consider the diffeomorphism $\iota_{3,1} \colon \Sigma_g \to \partial H_g$ that sends the $g$ $\alpha$-curves on $\Sigma_g$ to the $g$ meridian curves on $\partial H_g$. Fix a finite set of signed points $P$ on $\Sigma_g$. By  \autoref{def:bordered-3-manifold-category}, the diffeomorphism $\iota_{3,1}$ gives rise to a category $\mathcal{S}(H_g,P)$. Similarly, we can consider the diffeomorphism $\iota_{1,2} \colon \Sigma_g \to \partial H_g$ sending the $g$ $\beta$-curves on $\Sigma_g$ to the $g$ meridian curves on $\partial H_g$, and the diffeomorphism $\iota_{2,3} \colon \Sigma_g \to \partial H_g$ sending the $g$ $\gamma$-curves on $\Sigma_g$ to the $g$ meridian curves on $\partial H_g$. To distinguish the three resulting categories, we denote the category corresponding to the diffeomorphism $\iota_{i,j}$ by $\mathcal{S}(Y_{i,j}, P)$.

We know that the Heegaard diagram $(\Sigma; \alpha, \beta)$ is equivalent to the standard Heegaard splitting of $\partial Z_{k_1}$. Let $\phi_1 \colon -H_g \cup_{\Sigma_g} H_g \to \partial Z_{k_1}$ be the diffeomorphism induced by this equivalence. By definition, the diffeomorphism $\phi_1$ gives rise to an $(\mathcal{S}(Y_{3,1}, P), \mathcal{S}(Y_{1,2}, P))$-bimodule $F_{Z_{k_1}, P}$, which will be denoted as $F_{X_1,P}$. We similarly consider the Heegaard diagrams $(\Sigma_g; \beta, \gamma)$ and $(\Sigma_g; \gamma, \alpha)$, to obtain an  $(\mathcal{S}(Y_{1,2}, P), \mathcal{S}(Y_{2,3}, P))$-bimodule $F_{X_2, P}$, and an $(\mathcal{S}(Y_{2,3}, P), \mathcal{S}(Y_{3,1}, P))$-bimodule $F_{X_3, P}$. 

\begin{corollary}
    Let $X$ be a closed $4$-manifold. Given a trisection $X = X_1 \cup X_2 \cup X_3$, and a finite set of signed points $P$ in the central surface $\Sigma_g$, then
    $$
    \mathcal{S}(X \setminus \nu(\Sigma_g); P \times S^1) \cong \textup{HH}_0(F_{X_1, P} \otimes_{S(Y_{1,2};P)} F_{X_2, P} \otimes_{S(Y_{2,3};P)} F_{X_3, P}).
    $$
\end{corollary}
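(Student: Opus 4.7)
The plan is to assemble the statement by applying the gluing formulas from the previous two subsections in sequence. First, I would combine the three cornered pieces $X_1, X_2, X_3$ into a single cornered $4$-manifold $X'$ by iterated tensor product, and then apply the self-gluing theorem to collapse the resulting boundary identification.

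More concretely, let $X' = X_1 \cup_{Y_{1,2}} X_2 \cup_{Y_{2,3}} X_3$. Since $\partial X_1 = (-Y_{3,1}) \cup_{\Sigma_g} Y_{1,2}$, $\partial X_2 = (-Y_{1,2}) \cup_{\Sigma_g} Y_{2,3}$, and $\partial X_3 = (-Y_{2,3}) \cup_{\Sigma_g} Y_{3,1}$, two successive applications of \autoref{thm:theorem1} (whose associativity is recorded in the remark following its proof) yield a natural isomorphism of $(\mathcal{S}(Y_{3,1},P), \mathcal{S}(Y_{3,1},P))$-bimodules
\[
F_{X',P} \;\cong\; F_{X_1,P} \otimes_{\mathcal{S}(Y_{1,2},P)} F_{X_2,P} \otimes_{\mathcal{S}(Y_{2,3},P)} F_{X_3,P}.
\]
In particular, taking zero\textsuperscript{th} Hochschild homology (\autoref{def:self}) commutes with this isomorphism, so
\[
\textup{HH}_0(F_{X',P}) \;\cong\; \textup{HH}_0\bigl(F_{X_1,P} \otimes_{\mathcal{S}(Y_{1,2},P)} F_{X_2,P} \otimes_{\mathcal{S}(Y_{2,3},P)} F_{X_3,P}\bigr).
\]

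Next, I would apply \autoref{thm:theorem2} to the cornered $4$-manifold $X'$, whose boundary is $(-Y_{3,1}) \cup_{\Sigma_g} Y_{3,1}$ with the two copies of $Y_{3,1} = H_g$ identified via the gluing determined by the trisection. This gives
\[
\textup{HH}_0(F_{X',P}) \;\cong\; \mathcal{S}(\bar{X}'^{\,0},\, P \times S^1).
\]
The remaining step is the topological identification $\bar{X}'^{\,0} \cong X \setminus \nu(\Sigma_g)$: by construction, self-gluing $X'$ along $Y_{3,1}$ reassembles the entire trisection, so $\bar{X}' \cong X$, and removing an open tubular neighborhood of $\Sigma_g$ corresponds to passing from $\bar{X}'$ to $\bar{X}'^{\,0}$. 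Chaining the three isomorphisms gives the desired result.

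I expect the conceptual content of the argument to lie entirely in \autoref{thm:theorem1} and \autoref{thm:theorem2}; the main thing to verify carefully is the bookkeeping of the various diffeomorphisms $\iota_{i,i+1}$ and $\phi_i$ extracted from the trisection diagram (defined at the end of \autoref{subsec:trisections}), ensuring that the gluing along $Y_{1,2}$ and $Y_{2,3}$ and the final self-identification along $Y_{3,1}$ do in fact reconstruct the closed trisected manifold $X$ with its standard smooth structure. Modulo this verification, the corollary follows immediately from the two gluing theorems together with associativity of the bimodule tensor product.
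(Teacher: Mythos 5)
Your proposal is correct and follows exactly the route the paper takes: two applications of \autoref{thm:theorem1} to assemble $X' = X_1 \cup_{Y_{1,2}} X_2 \cup_{Y_{2,3}} X_3$ with $\partial X' \cong (-H_g)\cup_{\Sigma_g} H_g$, followed by \autoref{thm:theorem2} for the self-gluing along $Y_{3,1}$, together with the identification $\bar{X}'^{\,0}\cong X\setminus\nu(\Sigma_g)$. You also correctly observe that \autoref{thm:theorem3} is not needed since the corollary only concerns the complement of $\nu(\Sigma_g)$.
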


Here $\textup{HH}_0$ denotes the zero\textsuperscript{th} Hochschild homology; see \autoref{def:self}.

\section{The invariant of closed surfaces} \label{sec:other}

In this section we describe an extension of the theory down to two dimensions.

\begin{definition}
    To an oriented surface $\Sigma$ we associate the bicategory $\mathcal{S}(\Sigma)$ defined as follows.
    
    \begin{itemize}
    \setlength{\itemsep}{5pt}
        \item[-] The objects of $\mathcal{S}(\Sigma)$ are oriented closed $0$-manifolds embedded in $\Sigma$, that is, finite sets of signed points in $\Sigma$.

        \item[-] The $1$-cells between two sets of signed points $P$ and $Q$ are oriented tangles embedded in $\Sigma \times I$, with boundary $-Q \times \{-1\} \sqcup P \times \{1\}$. The identity $1$-cell from $P$ to $P$ is given by the trivial tangle $P \times I$.

        \item[-] Given tangles $b'$ and $b$ from $P$ to $Q$, let $L$ denote the oriented link
        $$
        (P \times \{1\} \times I) \cup (-b \times \{-1\}) \cup (-Q \times \{-1\} \times I) \cup (-b \times \{1\})
        $$
        Then the $2$-cells from $b'$ to $b$ are the elements in $\mathcal{S}(\Sigma \times I \times I, L)$.
    \end{itemize}

    Compositions of $1$-cells are stacking of tangles, followed by a rescaling of $I$. Horizontal composition of $2$-cells is modeled on stacking lasagna fillings along the first interval factor $I$, while vertical composition of $2$-cells is modeled on stacking lasagna fillings along the second interval factor $I$. Both are implemented by the gluing map in \autoref{eq:naive_gluing_map}, followed by a rescaling of $I$.

    Given $1$-cells $b_1, b_2, b_3$, the associator from $(b_1 \circ b_2) \circ b_3$ to $b_1 \circ (b_2 \circ b_3)$ is the isotopy between the two tangles given by rescaling $I$, considered as a lasagna filling embedded in $\Sigma \times I \times I$. Given a $1$-cell $b$ from $P$ to $Q$, the left unitor from $(Q \times I) \cup b$ to $b$ and the right unitor from $b \cup (P \times I)$ to $b$ are likewise given by isotopies.
\end{definition}

\begin{figure}[ht!]
    \centering
    \includegraphics[width=0.35\linewidth]{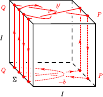}
    \caption{The $4$-manifold $\Sigma \times I \times I$ with the link $L = (P \times \{1\} \times I) \cup (-b \times \{-1\}) \cup (-Q \times \{-1\} \times I) \cup (b' \times \{1\})$ on its boundary.
    \label{fig:surface-double-category}}
\end{figure}

Let $\mathfrak{Cat}$ denote the bicategory of bimodules over small categories enriched in $\mathbb{Z}^2$-graded $\mathbb{Z}$-modules. The objects of $\mathfrak{Cat}$ are enriched small categories, the $1$-cells between two categories $\mathcal{C}$ and $\mathcal{D}$ are $(\mathcal{C}, \mathcal{D})$-bimodules, i.e. bifunctors from $\mathcal{C} \times \mathcal{D}^{op}$ to $\mathcal{V}$ (see \autoref{def:cornered-skein-lasagna-bimodule}). The identity $1$-cell $id_{\mathcal{C}}$ from $\mathcal{C}$ to $\mathcal{C}$ is given by the hom-functor sending $(x, y)$ to $\textup{Hom}(x, y)$. Given two $(\mathcal{C}, \mathcal{D})$-bimodules $M$ and $M'$, the $2$-cells from $M$ to $M'$ are natural transformations from $M$ to $M'$, sometimes called bimodule homomorphisms. (See also \cite[Chapter 7]{Handbook} for a related discussion of the bicategory of profunctors, and see \cite[Example 2.1.26]{JY} for a related example.)

Composition of $1$-cells is the tensor product of bimodules. Vertical composition of $2$-cells is composition of bimodule homomorphisms, and horizontal composition of $2$-cells is tensor product of bimodule homomorphisms. Let $M$ be a $(\mathcal{B}, \mathcal{C})$-bimodule, $N$ be a $(\mathcal{C}, \mathcal{D})$-bimodule, and $P$ be a $(\mathcal{D}, \mathcal{E})$-bimodule. The associator is given by the natural isomorphism $(M \otimes_{\mathcal{C}} N) \otimes_{\mathcal{D}} P \cong M \otimes_{\mathcal{C}} (N \otimes_{\mathcal{D}} P)$, and the left unitor and the right unitor are given by the natural isomorphisms $N \otimes_{\mathcal{D}} id_{\mathcal{D}} \cong N$ and $id_\mathcal{C} \otimes_{\mathcal{C}} N \cong N$ respectively.

\begin{definition}
    Let $\mathfrak{C}$ be a bicategory. A \textit{left $\mathfrak{C}$-module} is a pseudofunctor from $\mathfrak{C}$ to $\mathfrak{Cat}$. A \textit{right $\mathfrak{C}$-module} is a pseudofunctor from $\mathfrak{C}^{op}$ to $\mathfrak{Cat}$.
\end{definition}

To a $3$-manifold $Y$ with an orientation-preserving diffeomorphism $\iota \colon \Sigma \to \partial Y$ we associate a right module over the bicategory $\mathcal{S}(\Sigma)$.

\begin{definition}\label{def:3manifold module}
We define the right $\mathcal{S}(\Sigma)$-module $\mathcal{S}_Y \colon \mathcal{S}(\Sigma)^{op} \to \mathfrak{Cat}$ as follows.
\begin{itemize}
\setlength{\itemsep}{5pt}
    \item[-] For a signed point set $P \subset \partial Y$, define $\mathcal{S}_Y(P) = \mathcal{S}(Y,P)$, the category associated to the $3$-manifold $Y$ and the point set $P$ in \autoref{def:bordered-3-manifold-category}.
    
    \item[-] For a $1$-cell $b$ in $\mathcal{S}(\Sigma)^{op}$ from $Q$ to $P$, the $(\mathcal{S}(Y, Q), \mathcal{S}(Y,P))$-bimodule $\mathcal{S}_Y(b)$ is a bifunctor
    $$
    \mathcal{S}_Y(b) \colon \mathcal{S}(Y, Q) \times \mathcal{S}(Y, P)^{op} \to \mathcal{V}.
    $$
    For a pair of tangles $u \in \textup{Obj}(\mathcal{S}(Y, Q))$ and $u' \in \textup{Obj}(\mathcal{S}(Y, P))$, define
    $$
    \mathcal{S}_Y(b)(T, \widetilde{T}) = \mathcal{S}(Y \times I, (u \times \{-1\}) \cup b \cup (-u' \times \{1\})).
    $$
    See \autoref{fig:3-manifold-bimodule} (left). For a pair of lasagna fillings $\alpha, \alpha'$, define $\mathcal{S}_Y(b)(\alpha, \alpha')$ to be the linear map induced by concatenating $-\alpha$ and $-\alpha'$.
    
    \item[-] For a $2$-cell $\beta$ in $\mathcal{S}(\Sigma)^{op}$ from $b$ to $b'$, which is a lasagna filling in $\mathcal{S}(\Sigma \times I \times I, (P \times \{1\} \times I) \cup (-b \times \{-1\}) \cup (-Q \times \{-1\} \times I) \cup (b' \times \{1\})$ as shown in \autoref{fig:surface-double-category}, we first collapse both $\Sigma \times \{-1\} \times I$ and $\Sigma \times \{1\} \times I$ along the interval $I$, then denote the resulting ``pinched'' lasagna filling still by $\beta$.
    
    Define $\mathcal{S}_Y(\beta)$ to be the bimodule homomorphism from $\mathcal{S}_Y(b)$ to $\mathcal{S}_Y(b')$, induced by concatenating the pinched $\beta$ on top of the lasagna fillings in $Y \times I$; see \autoref{fig:3-manifold-bimodule} (right).
\end{itemize}
\end{definition}

\begin{figure}[ht!]
\centering
    \begin{minipage}[b]{0.3\linewidth}
    \includegraphics[width=\linewidth]{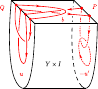}
    \end{minipage}
    \hspace{4em}
    \begin{minipage}[b]{0.3\linewidth}
    \includegraphics[width=\linewidth]{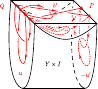}
    \end{minipage}
    \caption{
    Left: for a tangle $b$, the bimodule $\mathcal{S}_Y(b')$ sends a pair of tangles $u, u'$ to the skein lasagna module $\mathcal{S}(Y \times I, (u \times \{-1\}) \cup b \cup (-u' \times \{1\}))$. Right: for a lasagna filling $\beta$ in $\Sigma \times I \times I$, the bimodule homomorphism $\mathcal{S}_Y(\beta)$ is induced by stacking the pinched $\beta$ on top of the lasagna fillings in $Y \times I$.
\label{fig:3-manifold-bimodule}}
\end{figure}

If $\iota \colon \Sigma \to \partial Y$ is an orientation-reversing diffeomorphism, then we obtain a left $\mathcal{S}(\Sigma)$-module $S_Y \colon \mathcal{S}(\Sigma) \to \mathfrak{C}at$ instead. Again, if the diffeomorphism $\iota$ has no ambiguity in context, we will write $\partial Y = \Sigma$ if $\iota$ is orientation-preserving, write $\partial Y = -\Sigma$ if $\iota$ is orientation-reversing.

To formulate a gluing formula for $3$-manifold categories, next we define the tensor product of ${\mathcal S}(\Sigma)$-modules.
Recall from the last item of \autoref{def:3manifold module} that a $2$-cell $\beta$ in $\mathcal{S}(\Sigma)^{op}$ from $b$ to $b'$ induces (by vertical gluing, as shown in \autoref{fig:3-manifold-bimodule} (right)) 
maps
$$\mathcal{S}_{Y_1}(b)(u, u')\to \mathcal{S}_{Y_1}(b')(u, u'), \, F \mapsto F\cdot\beta, \;\,  {\rm and} \;\, \mathcal{S}_{Y_2}(b')(v, v')\to \mathcal{S}_{Y_2}(b)(v, v'), \, G \mapsto \beta\cdot G.
$$

\begin{definition} \label{def: tensor product 3manifolds}
    Given two compact $3$-manifolds $Y_1, Y_2$ with $\partial Y_1 = \Sigma$ and $\partial Y_2 = -\Sigma$, the tensor product $\mathcal{S}_{Y_1} \otimes_{\mathcal{S}(\Sigma)} \mathcal{S}_{Y_2}$ is the $1$-category defined as follows.

\begin{itemize}
\setlength{\itemsep}{5pt}
\item[-] The objects are 
         \[
         \Obj(\mathcal{S}_{Y_1} \otimes_{\mathcal{S}(\Sigma)} \mathcal{S}_{Y_2}) = \bigsqcup_{P \in \Obj({\mathcal{S}(\Sigma)})}\Obj(\mathcal{S}_{Y_1}(P)) \times \Obj(\mathcal{S}_{Y_2}(P)).\] 

\item[-] Given $u\in \Obj(\mathcal{S}_{Y_1}(P)), v\in  \Obj(\mathcal{S}_{Y_2}(P)), u'\in \Obj(\mathcal{S}_{Y_1}(Q)), v'\in  \Obj(\mathcal{S}_{Y_2}(Q))$ and a horizontal tangle $b$ from $P$ to $Q$, the morphism set ${\rm Hom}((u, v), (u', v'))$ in $\mathcal{S}_{Y_1} \otimes_{\mathcal{S}(\Sigma)} \mathcal{S}_{Y_2}$ is the graded ${\mathbb Z}$-module 
$$
{\rm Hom}((u, v), (u', v'))=\left[ \bigoplus_b  \mathcal{S}_{Y_1}(b)(u, u')\otimes_{\mathbb{Z}} \mathcal{S}_{Y_2}(b)(v, v')\right]\big/ \sim,
$$
where the equivalence relation $\sim$ is generated by
$ F\otimes (\beta\cdot G)\sim (F\cdot \beta)\otimes G
$, where 
 $F\in \mathcal{S}_{Y_1}(b)(u, u'), G\in \mathcal{S}_{Y_2}(b')(v, v')$, \autoref{fig:equivalence}.
\end{itemize}
\end{definition}

\begin{figure}[ht]
\includegraphics[height=8cm]{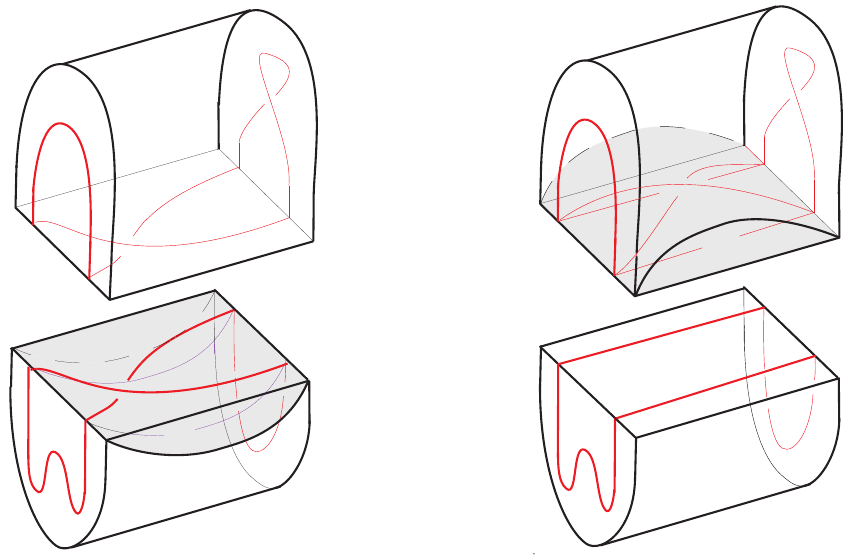}
{\small 
\put(-380,30){$Y_2\times I$}
\put(-380,170){$Y_1\times I$}
\put(-285,170){$F$}
\put(-285,25){$G$}
\put(-278,45){$\beta$}
\put(-75,185){$F$}
\put(-65,30){$G$}
\put(-57,122){$\beta$}
}
\caption{A schematic representation of the equivalence relation $ F\otimes (\beta\cdot G)\sim (F\cdot \beta)\otimes G
$. The ``pinched'' $2$-cell $\beta$ is shaded in the figure.}
\label{fig:equivalence}
\end{figure}

Throughout the rest of this section we will assume that the embedding $\Sigma\subset Y=Y_1\cup Y_2$ is fixed, while links in $Y$ and skein lasagna fillings  in $Y\times I$ can move by isotopies. Given a link $L\subset Y$ transverse to $\Sigma$, cut it along $\Sigma$ to get $(u,v)$, a pair of tangles in $Y_1, Y_2$ with common boundary points in $\Sigma$. In the proofs below we will consider links with various sub- and superscripts; by convention the corresponding tangles pairs will inherit the same decorations. For example, given $L', L^0$; the  corresponding pairs will be denoted $(u', v'), (u^0, v^0)$ respectively.

\begin{lemma} \label{lem: isomorphic objects}
    If $L, L'$ are isotopic links in $Y$ transverse to $\Sigma$, the resulting pairs $(u,v), (u',v')$ are isomorphic objects in $\mathcal{S}_{Y_1} \otimes_{\mathcal{S}(\Sigma)} \mathcal{S}_{Y_2}$.
\end{lemma}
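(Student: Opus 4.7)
The plan is to construct an explicit isomorphism from any ambient isotopy $L \rightsquigarrow L'$ in $Y$ and to verify invertibility by running the isotopy backward, with a fragmentation argument in the spirit of \autoref{lem:lemma1}.

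First, I would choose an ambient isotopy $H\colon Y\times I\to Y$ with $H_0=\id$ and $H_1(L)=L'$, and take its trace $\Sigma_H\subset Y\times I$, an embedded oriented surface with $\partial\Sigma_H=(L\times\{0\})\cup(L'\times\{1\})$, viewed as a lasagna filling with no input balls. After a small perturbation rel boundary I may assume $\Sigma_H\pitchfork \Sigma\times I$; cutting along $\Sigma\times I$ produces a tangle $b:=\Sigma_H\cap(\Sigma\times I)$ in $\Sigma\times I$ (a $1$-cell of $\mathcal{S}(\Sigma)$) together with lasagna fillings $F:=\Sigma_H\cap(Y_1\times I)\in\mathcal{S}_{Y_1}(b)(u,u')$ and $G:=\Sigma_H\cap(Y_2\times I)\in\mathcal{S}_{Y_2}(b)(v,v')$. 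Then $[F\otimes G]$ defines a morphism $(u,v)\to(u',v')$ in $\mathcal{S}_{Y_1}\otimes_{\mathcal{S}(\Sigma)}\mathcal{S}_{Y_2}$. Applying the same construction to the reverse isotopy $\bar H_t=H_{1-t}$ yields data $\bar F,\bar G,\bar b$ and a candidate inverse $[\bar F\otimes\bar G]$.

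Composition in the tensor-product category is implemented by stacking along $I$, so $[\bar F\otimes\bar G]\circ[F\otimes G]$ is represented by $(F\cdot\bar F)\otimes(G\cdot\bar G)$ over the concatenated tangle $b\cdot\bar b$. The out-and-back isotopy $H\cdot \bar H$ admits a standard two-parameter null-homotopy to the constant isotopy at $L$, so the stacked surface $\Sigma_H\cup\bar\Sigma_H\subset Y\times[0,2]$ is ambient isotopic rel boundary to $L\times[0,2]$, whose cuts along $\Sigma\times[0,2]$ are $u\times I$, $v\times I$, and $P\times I$—precisely the identity morphism on $(u,v)$.

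To turn this ambient isotopy into an identity inside the tensor-product category, I would fragment it into a finite sequence of local moves supported in small $4$-balls, using the fragmentation lemma from \cite{Banyaga} as in the proof of \autoref{lem:lemma1}. Each move is of one of three types: supported inside $Y_1\times I$, supported inside $Y_2\times I$, or crossing $\Sigma\times I$. Moves of the first two types leave $[F\otimes G]$ unchanged because they correspond to isotopies of $F$ or $G$ rel boundary. Each move of the third type changes the horizontal tangle by a local $2$-cell $\beta$ of $\mathcal{S}(\Sigma)$ and is absorbed by the defining relation $F\otimes(\beta\cdot G)\sim(F\cdot\beta)\otimes G$. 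The symmetric argument gives $[F\otimes G]\circ[\bar F\otimes\bar G]=\id_{(u',v')}$. The main obstacle is this fragmentation step: one must verify that every ``singular event'' at which the trace of the isotopy interacts topologically with $\Sigma\times I$ is realized by a genuine $2$-cell of $\mathcal{S}(\Sigma)$, and that the relation $\sim$ is rich enough to absorb them all—the two-dimensional analogue of the singular-event analysis in \autoref{lem:lemma1}, one categorical level higher.
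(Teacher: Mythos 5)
Your proposal is correct and follows essentially the same strategy as the paper: the morphism is the trace of the isotopy cut along $\Sigma\times I$, the candidate inverse comes from the reversed isotopy, and the composites are identified with identities by localizing the interaction with $\Sigma\times I$ into finitely many events each absorbed by the relation $F\otimes(\beta\cdot G)\sim(F\cdot\beta)\otimes G$. The only (cosmetic) difference is packaging — the paper splits the isotopy at the tangency times and inverts each segment separately, whereas you form one global morphism and then fragment the out-and-back null-homotopy — but the underlying ingredients are identical.
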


\begin{proof}
    Consider the track of an isotopy $A:=(\sqcup S^1)\times I\subset Y\times I$. Applying an isotopy rel boundary if necessary, we may assume this collection of annuli $A$ is transverse to the fixed embedding $\Sigma\times I\subset Y\times I$. Let $L_t$ denote the slice  $(\sqcup S^1)\times \{ t\}$, $0\leq t\leq 1$, where $L_0=L$ and $L_1=L'$. 

    There is a finite collection of times $t_i$ when $L_t$ is tangent to $\Sigma$. Let $L'_i$ denote the link $L_{t_i-\varepsilon}$ and $L''_i$ the link $L_{t_i+\varepsilon}$. The intersection of the track of the isotopy with $Y_1\times I$ and $Y_2\times I$ gives skein lasagna fillings $F_i\subset Y_i\times I$, $i=1,2$. The equivalence class of the tensor product $[F_1\otimes F_2]$ gives morphisms $f\colon (u'_i,v'_i)\to (u''_i, v''_i)$, $g\colon (u''_i,v''_i)\to (u'_i, v'_i)$.

    The equivalence relation on morphisms in \autoref{def: tensor product 3manifolds} implies that $g\circ f={\rm Id}_{(u'_i,v'_i)}, f\circ g={\rm Id}_{(u''_i,v''_i)}$, \autoref{fig:equivalence1}.
    Away from the critical times $\{ t_i\}$, the isotopy of links does not interact with $\Sigma$, and the track of the isotopy provides isomorphisms $(u''_i,v''_i)\cong (u'_{i+1}, v'_{i+1})$.
\end{proof}
\begin{figure}[t]
\includegraphics[height=8cm]{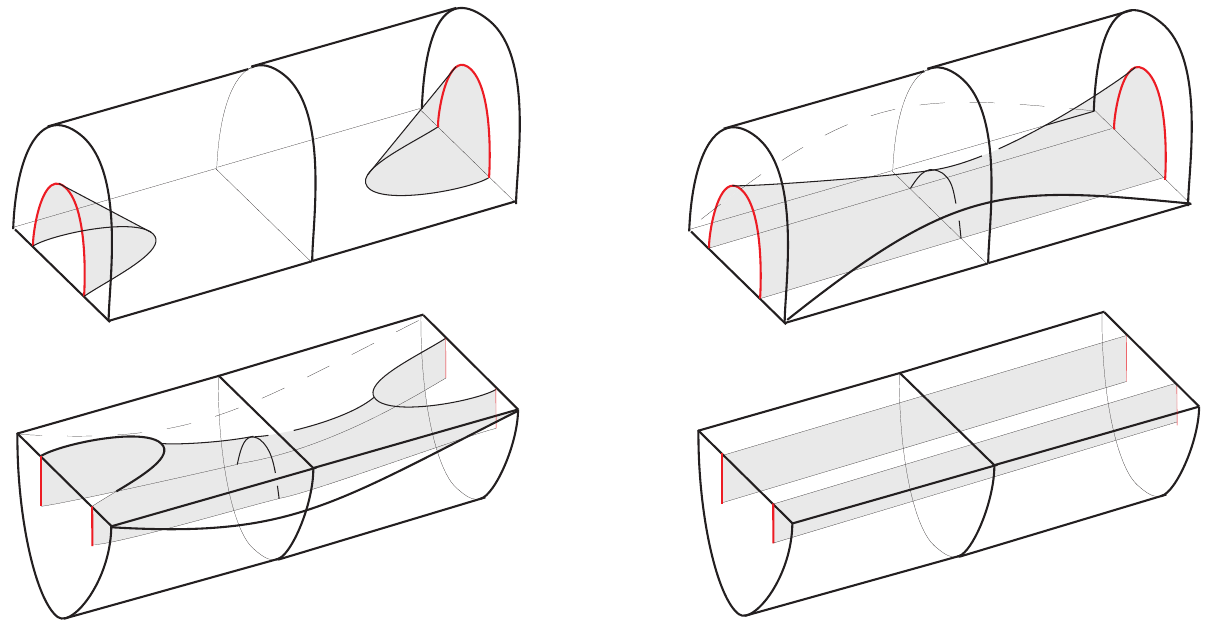}
\caption{Left: a local illustration of a morphism $g\circ f$. It is equivalent (moving a pinched $2$-cell from $Y_2\times I$ to $Y_1\times I$ as in as in \autoref{fig:equivalence}) to the morphism on the right, which is the identity morphism.}
\label{fig:equivalence1}
\end{figure}

\begin{remark}
    Different isotopies $L\cong L'$ may give rise to different isomorphisms $(u,v)\cong (u',v')$.
\end{remark}

\begin{theorem} \label{thm: gluing 3-manifolds}
    Suppose that $Y_1, Y_2$ are $3$-manifolds with $\partial Y_1 = \Sigma$ and $\partial Y_2 = -\Sigma$. Then
    $$
    \mathcal{S}_{Y} \cong \mathcal{S}_{Y_1} \otimes_{\mathcal{S}(\Sigma)} \mathcal{S}_{Y_2},
    $$
    where $Y=Y_1 \cup_{\Sigma} Y_2$.
\end{theorem}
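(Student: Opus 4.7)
The plan is to construct an equivalence of categories $\Phi \colon \mathcal{S}_{Y_1} \otimes_{\mathcal{S}(\Sigma)} \mathcal{S}_{Y_2} \to \mathcal{S}_Y$. On objects, $\Phi$ sends a pair $(u,v)$ (with $u \subset Y_1$ and $v \subset Y_2$ meeting along a point set $P \subset \Sigma$) to the link $u \cup v \subset Y = Y_1 \cup_\Sigma Y_2$. On morphisms, $\Phi$ sends the equivalence class $[F \otimes G]$, where $F$ is a lasagna filling in $Y_1 \times I$ and $G$ is a lasagna filling in $Y_2 \times I$ sharing the same tangle $b \subset \Sigma \times I$ on their common cornered boundary, to the glued lasagna filling $F \cup_{\Sigma \times I} G$ in $Y \times I$, regarded as a morphism $u \cup v \to u' \cup v'$ in $\mathcal{S}_Y$.

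First I would verify that $\Phi$ is well defined on morphisms: the generating relation $F \otimes (\beta \cdot G) \sim (F \cdot \beta) \otimes G$ is respected because in either case the pinched $2$-cell $\beta$ becomes interior data of the same glued lasagna filling in $Y \times I$. Functoriality then follows since composition in both categories is realized by stacking along the interval $I$, and the gluing $\cup_{\Sigma \times I}$ commutes with concatenation along $I$. Essential surjectivity is almost automatic from \autoref{lem: isomorphic objects}: any link $L \subset Y$ is isotopic to one transverse to $\Sigma$, yielding $L \cong u \cup v$ for some pair $(u,v)$, and the track of the isotopy is a lasagna filling in $Y \times I$ providing an isomorphism between $L$ and $\Phi(u,v)$ in $\mathcal{S}_Y$.

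The most substantive step is verifying that $\Phi$ is fully faithful. Fixing objects $(u, v)$ and $(u', v')$, this reduces to the cornered gluing formula \autoref{lem:lemma1} applied to the cornered $4$-manifolds $Y_1 \times I$ and $Y_2 \times I$, glued along their common boundary piece $\Sigma \times I$ to give $Y \times I$. The sum over intermediate tangles in \autoref{lem:lemma1} becomes the direct sum over tangles $b \subset \Sigma \times I$ in the definition of the tensor product, and the relation $\sim_1$ matches our relation $\sim$. The main obstacle I anticipate is the cornered bookkeeping needed to apply \autoref{lem:lemma1} in this configuration: the $4$-manifold $Y_i \times I$ has corners along the disconnected surface $\Sigma \times \{-1, 1\}$, with $\Sigma \times I$ playing the role of the ``middle'' $3$-manifold from \autoref{lem:lemma1}. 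One must verify that the proof of \autoref{lem:lemma1} (in particular its isotopy and fragmentation arguments) extends verbatim to this disconnected-corner setting, and that the $\sim_1$ relation there is generated by the more restrictive $\sim$ relation coming from pinched $2$-cells $\beta$ in $\Sigma \times I \times I$ rather than general morphisms in a $3$-manifold category.
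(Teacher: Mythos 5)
Your proposal is correct, and its technical core is the same as the paper's, but the categorical packaging is genuinely different. The paper also starts from the gluing functor (called $\alpha$ there, your $\Phi$), but instead of invoking the fully-faithful-plus-essentially-surjective criterion it builds an explicit quasi-inverse $\beta$: for each link $L\subset Y$ it fixes once and for all a transverse representative $L_0$ and an isotopy $H_L$ from $L$ to $L_0$, sends $L$ to the pair $(u_0,v_0)$ obtained by cutting $L_0$, sends a morphism $F$ to the cut of $H_L^{-1}\circ F\circ H_{L'}$ along $\Sigma\times I$, and then uses \autoref{lem: isomorphic objects} to supply the natural isomorphisms $\beta\circ\alpha\cong\mathrm{id}$ and $\alpha\circ\beta\cong\mathrm{id}$. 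Where you apply the full bijectivity statement of \autoref{lem:lemma1} to $(Y_1\times I)\cup_{\Sigma\times I}(Y_2\times I)$ to get fully faithfulness, the paper invokes only the injectivity half of a ``directly analogous'' argument to show $\beta$ is well defined on morphisms; in both cases the substantive input is the same transversality-and-fragmentation argument of \autoref{lem:lemma1}, extended to the configuration where the corner surface is the disconnected $\Sigma\times\{-1,1\}$ and the middle $3$-manifold is $\Sigma\times I$ --- an extension the paper likewise asserts rather than reproves, and which indeed goes through since nothing in that proof uses connectedness. Your route avoids the choices of $L_0$ and $H_L$ and the verification that the resulting isomorphisms are natural, at the cost of not exhibiting an inverse functor; the paper's route produces the explicit $\beta$. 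Two small points: your worry that $\sim$ might be strictly more restrictive than $\sim_1$ evaporates on inspection, because a pinched $2$-cell of $\mathcal{S}(\Sigma)$ from $b$ to $b'$ is by construction precisely a morphism $b\to b'$ in the $3$-manifold category of $\Sigma\times I$ with point set $P\sqcup Q$, so the two relations coincide; and essential surjectivity needs only that isotopic links are isomorphic in the target $\mathcal{S}_Y$ (tracks of isotopies compose to product annuli), not \autoref{lem: isomorphic objects}, which concerns isomorphisms in the source category.
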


\begin{proof}
We start by setting up a notation for links and their isotopies. 
For each link $L\subset Y$ consider a fixed representative $L_0$, transverse to $\Sigma$, in its isotopy class. Also fix an isotopy $H_L$ from $L$ to $L_0$ in $Y$. Note that by \autoref{lem: isomorphic objects}, $H_L$ gives rise to specific isomorphisms
\begin{equation} \label{eq:isos}
    f_{u,v}\colon (u,v)\mapsto (u_0, v_0), \; g_{u,v}\colon (u_0,v_0)\mapsto (u, v).
\end{equation}

Define the functors $$ \alpha\colon \mathcal{S}_{Y_1} \otimes_{\mathcal{S}(\Sigma)} \mathcal{S}_{Y_2}\to \mathcal{S}_{Y}, \, \beta\colon \mathcal{S}_{Y} \to \mathcal{S}_{Y_1} \otimes_{\mathcal{S}(\Sigma)} \mathcal{S}_{Y_2}.$$

The first functor $\alpha$
is given on objects and morphisms by gluing tangles and lasagna fillings. 
Now we define $\beta$. It sends an object (a link $L\subset Y$) to $(u_0, v_0)\in {\rm Obj}(\mathcal{S}_{Y_1} \otimes_{\mathcal{S}(\Sigma)})$. In more detail, $L_0$ is the fixed representative in the isotopy class of $L$, transverse to $\Sigma$, and $(u_0, v_0)$ is the results of cutting $L_0$ along $\Sigma$. Note that we could not cut $L$ along $\Sigma$ to obtain a pair of tangles, in general, because $L$ might not be transverse to $\Sigma$. To summarize,
\begin{equation} \label{eq: beta}
 {\beta}(L)=(u_0, v_0).
 \end{equation}
Next we define $\beta$ on morphisms. Let $F\in {\rm Hom}(L,L')$, that is $F$ is a skein lasagna filling in ${\mathcal S}(Y\times I, -L\cup L')$.
Define $\widetilde{F}:=H^{-1}_L\circ F\circ H_{L'}$, a skein lasagna filling in ${\mathcal S}(Y\times I, -L_0\cup L'_0)$ obtained by horizontal concatenation of the chosen isotopy $H^{-1}_L\subset Y\times I$ from $L_0$ to $L$, followed by $F$, followed by the isotopy $H_{L'}$ from $L'$ to $L'_0$. We may assume that $\widetilde{F}$ is transverse to $\Sigma\times I$ by applying an isotopy rel boundary. Cut along $\Sigma\times I$ to get $\widetilde{F}_1\otimes \widetilde{F}_2\in \mathcal{S}_{Y_1}(b)(u_0, u'_0)\otimes \mathcal{S}_{Y_2}(b)(v_0, v'_0)$, where $b:=\widetilde{F}\cap (\Sigma\times I)$.
An argument directly analogous to the proof of
\autoref{lem:lemma1} shows that different choices of isotopies making $\widetilde{F}$ transverse to $\Sigma\times I$ do not affect the class of $\widetilde{F}_1\otimes \widetilde{F}_2$ modulo the equivalence relation on morphisms in \autoref{def: tensor product 3manifolds}. 
Observe that $\beta$ is a functor:
\begin{itemize}
    \item[-] $\beta({\rm Id}_{(u,v)})={\rm Id}_{(u,v)},$
    \item[-] $\beta(F\circ G)=\beta(F)\circ \beta(G).$
\end{itemize}
Both of these statements follow from the fact that the concatenation of the isotopies $H^{-1}_L\circ H_L$ is isotopic to the identity isotopy of $L_0$.

The proof of \autoref{thm: gluing 3-manifolds} is concluded by observing that $\beta\circ\alpha$ is naturally isomorphic to the identity functor of $\mathcal{S}_{Y_1} \otimes_{\mathcal{S}(\Sigma)} \mathcal{S}_{Y_2}$ and that $\alpha\circ \beta$ is naturally isomorphic to the identity functor of ${\mathcal{S}_Y}$. 

To prove the first statement, let $(u,v)\in{\rm Obj}(\mathcal{S}_{Y_1} \otimes_{\mathcal{S}(\Sigma)} \mathcal{S}_{Y_2})$. Let $L$ denote the link $u\cup v$ in $Y$, then $\beta(\alpha(u,v))=(u_0,v_0)$ as in \autoref{eq: beta}. Moreover, by \autoref{eq:isos} we have fixed isomorphisms between $(u,v)$ and $(u_0, v_0)$. Then for any morphism $F\in {\rm Hom}((u, v), (u', v'))$, the following diagram commutes:
\[
\xymatrix{(u,v) \ar[r]^{F}\ar[d]^{f_{u,v}} & (u',v')  \\
\beta(\alpha((u,v))=(u_0,v_0) \ar[r]^{\beta(\alpha(F))} & \beta(\alpha((u',v'))=(u'_0,v'_0)\ar[u]_{g_{u',v'}} }
\]
because 
\[
g_{u',v'}\circ \widetilde{F}\circ f_{u,v}= H_L\circ H^{-1}_L\circ F\circ H_{L'}\circ H^{-1}_{L'}\cong F,
\]
where the last equivalence is an isotopy.
It follows that the family $\{ f_{u,v}\}$ provides a natural isomorphism between $\beta\circ\alpha$ and the identity functor.

The proof that $\alpha\circ \beta$ is naturally isomorphic to ${\rm Id}_{\mathcal{S}_Y}$ is analogous, using isomorphisms (lasagna fillings given by isotopies $H_L$) between the objects corresponding to isotopic links $L, L_0$. 
\end{proof}

\sloppy 
\printbibliography[title={Bibliography}]

\end{document}